\theoremstyle{plain}
\newtheorem{theorem}{Theorem}
\newtheorem{proposition}[theorem]{Proposition}
\newtheorem{corollary}[theorem]{Corollary}
\newtheorem{remark}[theorem]{Remark}
\newtheorem{thm}{Theorem}[section]
\newtheorem{cor}[thm]{Corollary}
\newtheorem{lem}[thm]{Lemma}
\newtheorem{prop}[thm]{Proposition}
\theoremstyle{definition}
\def\bepr{\begin{proposition}}
\def\enpr{\end{proposition}}
\def\bec{\begin{corollary}}
\def\enc{\end{corollary}}
\def\xx{\mathbf x}
\def\yy{\mathbf y}
\def\d{\mathrm d}
\def\dtv{d_{\ms{TV}}}
\def\R{\mathbb R}
\def\N{\mathbb N}
\def\P{\mathbb P}
\def\PP{\mathbb P}
\def\E{\mathbb E}
\def\g{\gamma}
\def\es{\emptyset}
\def\ms{\mathsf}
\def\sm{\setminus}
\def\om{\omega}
\newcommand{\nc}{\newcommand}
\nc{\diam}{\mathsf{diam}}
\nc{\I}{{\mathds 1}}
\nc{\one}{{\mathds 1}}
\nc{\bN}{{\mathbf N}}
\nc{\bM}{{\mathbf M}}
\nc{\cB}{{\mathcal B}}
\nc{\cM}{{\mathcal M}}
\nc{\X}{{\mathbb X}}
\nc{\Z}{{\mathbb Z}}
\nc{\BX}{{\mathbb X}}
\nc{\BY}{{\mathbb Y}}
\nc{\cX}{{\mathcal X}}
\nc{\cY}{{\mathcal Y}}
\nc{\cN}{{\mathcal N}}
\nc{\cF}{{\mathcal F}}
\nc{\tv}{d_{\mathrm TV}}
\nc{\BP}{\mathbb{P}}
\nc{\BE}{\mathbb{E}}
\nc{\BQ}{\mathbb{Q}}
\nc{\RR}{\mathbb{R}}
\nc{\SSS}{\mathbb{S}}
\nc{\ind}[1]{\I_{#1}\,}
\newcommand{\del}{\mathbf{Del}}
\newcommand{\PPP}[1]{\operatorname{\mathbb{P}}\left(\,#1\,\right)}
\newcommand{\conv}[2][n]{\underset{#1\rightarrow #2}{\longrightarrow}}
\newcommand{\EEE}[1]{\operatorname{\mathbb{E}}\left[\,#1\,\right]}
\newcommand{\eq}[2][n]{\underset{#1\rightarrow #2}{\sim}}
\newcommand{\EE}{\mathbb{E}}
\theoremstyle{definition}
\numberwithin{equation}{section}
\definecolor{OliveGreen}{cmyk}{0.64,0,0.95,0.40}
\begin{document} 
\author{Nicolas Chenavier}
\author{Moritz Otto}

\address[Nicolas Chenavier]{Universit\'e du Littoral C\^ote d'Opale, 50 rue F. Buisson, 62228 Calais, France}
\email{nicolas.chenavier@univ-littoral.fr}	
\address[Moritz Otto]{Department of Mathematics, Aarhus University, Ny Munkegade 118, 8000 Aarhus C, Denmark}
\email{otto@math.au.dk}

\renewcommand{\thefootnote}{\fnsymbol{footnote}}

\title{Compound Poisson process approximation\\
 under $\beta$-mixing and stabilization} 

\begin{abstract} 
\noindent 
We establish Poisson and compound Poisson approximations for stabilizing statistics of $\beta$-mixing point processes and give explicit rates of convergence. Our findings are based on a general estimate of the total variation distance of a stationary $\beta$-mixing process and its Palm version. As main contributions, this article (i) extends recent results on Poisson process approximation to non-Poisson/binomial input, (ii) gives concrete bounds for compound Poisson process approximation  in a Wasserstein distance and (iii) illustrates the applicability of the general result in an example on minimal angles in the stationary Poisson-Delaunay tessellation. The latter is among the first (nontrivial) situations in Stochastic Geometry, where compound Poisson approximation can be established with explicit extremal index and cluster size distribution.
\end{abstract}
\maketitle

\noindent \textbf{Keywords:} compound Poisson point process; Wasserstein distance; Palm distribution; coupling; beta-mixing; Delaunay tessellation.

\vspace{0.3cm}

\noindent \textbf{AMS 2020 Subject Classifications:} 60D05 . 60G55 . 60G70 . 60K35

\section{Introduction}
One of the most active fields in Stochastic Geometry concerns extremes of geometric features that are based on point processes. To modelize this, let $z:(\R^d)^k\rightarrow\R$ be a center function and let $g:(\R^d)^k\times \mathbf{N}\rightarrow\{0,1\}$, $k\geq 1$, be measurable and translation-invariant, where $\mathbf{N}$ denotes the set of locally finite couting measures. Given a point process $\xi$ in $\R^d$, a natural quantity is:
\[\Xi = \sum_{\mathbf{x}\in \xi_{\neq}^k}g(\mathbf{x},\xi)\delta_{z(\mathbf{x})}.\]
The point process $\Xi$ often arises to deal with extremes in Stochastic Geometry. For example, if $k=1$, $z(x)=x$, $g(x, \xi)=\one\{d(x,\xi\setminus\{x\})>v_n\}\one\{x\in W_n\}$, where $d(x,\xi\setminus\{x\})$ denotes the distance between $x$ and $\xi\setminus\{x\}$ and where $W_n=[-n^{1/d}/2,n^{1/d}/2]^d$ is an arbitrary large window, then $\Xi$ counts the points in  $W_n$ for which the distance to the nearest neighbor is larger than some (suitable) threshold $v_n$. To describe the point process $\Xi$, many Poisson approximation results have been recently established. For instance, in \cite{S05,S09}, Poisson approximations  for dependent thinnings of point processes that have a density with respect to a Poisson process are derived. In \cite{ST12,ST16}, the Malliavin calculus on the Poisson space is used  to determine scaling limits for Poisson $U$-statistics. In \cite{C14,CH16,CN19,PS22}, the Chen-Stein method is  applied to deal with the order statistics of various quantities for random tessellations. In \cite{CHO22}, Poisson approximation for the volume of $k$-th nearest neighbor balls is discussed in the Euclidean space and  in the hyperbolic space. Rates of convergence for the Poisson approximations are generally provided, in terms of total variation distance and Kantorovich-Rubinstein (KR) distance. To the best of our knowledge, thanks to a coupling approach, the most precise results for statistics of Poisson point processes can be found in \cite{BSY22,O22}. These results are formulated in the strong KR distance and have been generalized to Poisson hyperplanes in \cite{O21} and to Gibbs processes in \cite{LS2019}. However, Poisson process approximation in the KR distance requires a Palm coupling which is in general hard to establish. Therefore, in this paper we focus on (compound) Poisson approximation for the large class of $\beta$-mixing processes in a weaker Wasserstein distance.  

Poisson approximation in general requires two assumptions: a mixing condition for the point process $\xi$ and anti-clustering condition arising in $g(\mathbf{x},\xi)$. When the latter is not satisfied, the exceedances (namely the set of points  $z(\mathbf{x})$ such that $g(\mathbf{x},\xi)\neq 0$) can be realized into clusters. In this case, under suitable assumptions, the point process $\Xi$ can be approximated by a compound Poisson point process (see \cite{BMP22,CR18} for examples concerning random tessellations and the $k$-th nearest neighbor graph). However, to the best of our knowledge, no rate of convergence is established in the context of Stochastic Geometry. 

In this paper, we make explicit the rate of convergence for compound Poisson approximation. As first results, we establish  general upper bounds in terms of total variation distance and Wasserstein distance (Propositions \ref{prop:mix} and \ref{pr:cp_app}). Then we apply the latter to have precise estimates for stabilizing statistics, first for deriving Poisson approximation (Theorem \ref{Th:Poi}) then for deriving compound Poisson approximation (Theorem \ref{Th:CP}). Our results are based on Palm couplings. We illustrate our main theorems through two examples. The first one concerns the largest distances to the nearest neighbor when the underlying point process is the so-called Gauss-Poisson process and extends a result of  \cite{C14}. The second one is unprecedented and concerns the smallest angles in a Poisson-Delaunay tessellation. In particular, we make explicit the so-called extremal index and the cluster size distribution. Except a recent work due to Basrak \textit{et al.} \cite{BMP22} it is the first time it is done in the context of Stochastic Geometry.

Our paper is organized as follows. In Section \ref{sec:preliminaries}, we recall some notions on point processes and give two technical results. In Section \ref{sec:stabilizing}, we apply the latter to obtain Poisson and compound Poisson approximations with explicit rates of convergence when the function $g$ is exponentially stabilizing. Our theorems are illustrated through the two examples discussed above in Section \ref{sec:applications}.

\section{Preliminaries}
\label{sec:preliminaries}
In this section, we  recall some notions on point processes and  establish two technical results which will be used to derive our main theorems.

\subsection{Basic notions on point processes}

\subsubsection{Point processes}
\label{sec:basicscompound}
First, we give some notation. In what follows, we denote by $||\cdot ||$ the Euclidean norm of $\R^d$, $d\geq 1$, and by $\mathcal B^d$ the Borel $\sigma$-algebra. For any $B\in \mathcal{B}^d$, we denote by $|B|$ the volume of $B$. We write $\mathbf{N}$ for the space of all $\sigma$-finite counting measures on $\R^d$ and $\widehat{\mathbf{N}}$ for the space of all finite counting measures on $\R^d$. We equip $\mathbf{N}$ and $\widehat{\mathbf{N}}$ with the corresponding $\sigma$-algebras that are induced by the mappings $\omega \mapsto \omega(B)$ for all $B \in \mathcal{B}^d$. The latter are denoted $\mathcal{N}$ and $\widehat{\mathcal{N}}$ and are the Borel $\sigma$-algebras with respect to the Fell topologies on $\mathbf{N}$ and $\widehat{\mathbf{N}}$, respectively (see e.g. \cite{SW08}, p. 563).

A \textit{point process} in $\R^d$ is a random variable $\xi: \Omega\rightarrow \mathbf{N}$ defined on some probability space $(\Omega, \mathcal{A}, \PP)$. The point process is said \textit{simple} if $\xi(\{x\}) \in \{0,1\}$ for any $x\in \R^d$. In this case, with a slight abuse of notation, we identify the point process $\xi$ to its support, which is a random closed subset in $\R^d$. 


Now, let $\pi_1,\pi_2,\dots$ be finite measures on $\R^d$. A {\em compound Poisson point process} with parameters $\pi_1,\,\pi_2,\dots$ is a point process $\psi$ of the form
\begin{align*}
	\psi(B)=\sum_{i \ge 1} i\zeta(B \times \{i\}) ,\quad B \in \mathcal B^d,
\end{align*}
where $\zeta$ is a Poisson point process in $\R^d\times \mathbb{N}^*$ with intensity measure $\E [\zeta]$ given by $\E [\zeta (B\times \{i\})]=\pi_i(B)$ for $B \in \mathcal B^d$ and $i\in \mathbb{N}$. In what follows, we let $\pi:=\pi_1+\pi_2+\dots$ and write $\mathsf{CP}(\pi_1,\pi_2,\dots)$ for the distribution of a compound Poisson process with parameters $\pi_1,\pi_2,\dots$  and $\mathsf{Po}(\pi)$ for the distribution of a Poisson process with intensity measure $\pi$. Intuitively, $\pi_i$ is the intensity measure for clumps of size $i$. If $\psi$ is stationary, then there exists $\gamma>0$ (referred to as the \textit{intensity} of $\psi$) and a distribution $\mathbb{Q}$ (referred to as the \textit{cluster size distribution}) on $\mathbb{N}^*:=\{1,2,\dots\}$ such that $\pi_i(B) = \gamma |B| \mathbb{Q}(i)$ for any $i\geq 1$. In this case, $\psi$  can be identified to its support, which is a random set of points of the form $\{(x_k,m_k):k\geq 1\}\subset \R^d\times \mathbb{N}^*$,  where  $\{x_k:k\geq 1\}$ is a stationary Poisson point process of intensity $\gamma$ in $\R^d$ and where the $m_k$'s are i.i.d., with distribution $\mathbb{Q}$, and independent of the $x_k$'s. We have the  
relation \[\zeta(B\times \{i\}) = \sum_{k\geq 1}\one\{x_k\in B\}\one\{m_k=i\}.\]

\subsubsection{Beta-mixing and exponential decay of dependence}
Let $\xi$ be a point process in $\mathbb{R}^d$. To quantify the spatial decay of dependence, we introduce the notion of \textit{$\beta$-mixing coefficient}. The latter is defined as follows (see \cite{CX22}): for any $A, B \in \mathcal B^d$,
\[\beta_{A,B}^{\xi}:=\frac 12  \int_{{\mu}_1\in \mathcal{N}_A}\int_{{\mu}_2 \in \mathcal{N}_B} \left|\PPP{\xi_{A} \in \mathrm{d}\mu_1,\xi_{B} \in \mathrm{d} \mu_2}-\PPP{\xi_A\in \mathrm{d}\mu_1} \PPP{\xi_B \in \mathrm{d}\mu_2}\right|,\]
where $\xi_{C}:=\xi \cap C$ denotes the restriction of $\xi$ to $C$ and $\mathcal N_C$ is the induced $\sigma$-field of $\mathcal N$ on $C$, for any $C\in \mathcal B^d$. In what follows, we let $\diam(A)=\sup\{||x-y||, x, y\in A\}$  and $d(A,B)=\inf\{||x-y||, x\in A, y\in B \}$ for any $A,B\in \mathcal B^d$. 

In the sense of \cite{CX22}, we say that the point process $\xi$ has the {\em exponential decay of dependence} ($\mathsf{EDD}$) property if there are constants $\theta_0 \in [0,+\infty)$ and $\theta_i \in (0,+\infty),\, 1 \le i \le 4,$ such that for all $A,B \in \mathcal B^d$ with $d(A,B)\ge \theta_3 \log(\diam(A) \vee \diam(B) \vee \theta_4)$,
\begin{align*}
	\beta^\xi_{A,B} \le \theta_1 (\mathrm{diam}(A)^{\theta_0} \vee 1)(\mathrm{diam}(B)^{\theta_0} \vee 1) e^{-\theta_2d(A,B)}.
\end{align*}
Roughly speaking, the $\mathsf{EDD}$ property says that the total variation distance between the law of $(\xi_A, \xi_B )$ and the law of the independent union of $\xi_A$ and $\xi_B$ decays exponentially fast as the
distance between $A$ and $B$ becomes large.

There is a wide class of point processes that satisfy the $\mathsf{EDD}$ property, including Gibbs point processes with nearly finite range potentials, determinantal point processes with fast decay kernels (in the sense of \cite[Lemma 3.2]{CX22}), $r$-dependent point processes and Boolean models (see Section 3 in \cite{CX22}). 


\subsubsection{Palm processes}
We briefly recapture some basic facts from Palm theory that we need to formulate and prove our results. Let $\xi$ be a point process in $\R^d$. We call $\xi^{x}$ a \textit{Palm version} of $\xi$ at the point $x \in \R^d$, if for any measurable function $f:\R^d \times \mathbf N \to \R_+:=[0,\infty)$,
\begin{equation}
\label{eq:defpalm}
\EEE{\int_{\R^d} f(x,\xi) \xi(\mathrm d x)}= \EEE{\int_{\R^d} f(x,\xi^{x}) \E[\xi](\mathrm d x)},
\end{equation}
where  $\E[\xi]$ denotes the \textit{intensity measure} defined by $\E[\xi](A)=\EEE{\xi(A)}$ for any Borel subset $A$ in $\R^d$.

More generally, we will need to consider the Palm version of $\xi$ with respect to a subprocess. To do it, let $k \in \N^*$ and $h:(\R^d)^k\times \mathbf N \to \{0,1\}$, $z:(\R^d)^k\rightarrow \R^d$  be two measurable functions such that $h$ is translation-invariant and $z$ is translation-covariant, which means that $z(x_1+y,\dots,x_k+y)=z(x_1,\dots,x_k)+y$ for any $x_1,\dots,x_k, y \in \R^d$. Let $D \subset \R^d$ be a compact subset in $\R^d$. Define a subprocess $\zeta$ of $\xi$ by
\begin{equation}
	\zeta:=\sum_{\xx \in \xi_{\neq}^k} h\left(\xx,\xi_{D_{z(\xx)}}\right)  \delta_{z(\xx)}, \label{def:Xil}
\end{equation}
where $D_z:=D+z$, $z\in \R^d$. We call $\xi^{z,\zeta}$ a Palm version of $\xi$ with respect to $\zeta$ if, for any measurable function $f:\R^d \times \mathbf N \to \R_+:=[0,\infty)$,
$$
\EEE{\int_{\R^d} f(x,\xi) \zeta(\d x)}= \EEE{\int_{\R^d} f(x,\xi^{x,\zeta}) \E[\zeta](\d x)}. 
$$
Intuitively, $\xi^{x,\zeta}$ describes the process $\xi$ given that $x$ is an atom of the subprocess $\zeta$.

\subsubsection{Distances of point processes}
We introduce below some distances following the same notation as \cite{BM02}. For any $x,y \in \R^d$, let $d_0(x,y):=||y-x|| \wedge 1$.   Let $\mathcal K$ be the set of all functions $k:\R^d \to \R$ such that
$$
\sup_{x \neq y \in \R^d} |k(x)-k(y)|/d_0(x,y)\le 1,
$$
and define a distance $d_1$ between finite measures $\mu$ and $\chi$ by

\begin{align*}
	d_1(\mu,\chi):&=
	\begin{cases}
		1,\quad &\mu(\R^d)\neq \chi(\R^d),\\
		0,\quad &\mu(\R^d)=\chi(\R^d)=0,\\
		m^{-1} \sup_{k \in \mathcal K} \left|\int k\, \d \mu - \int k\, \d \chi\right|,\quad &\mu(\R^d)=\chi(\R^d)=m>0.
	\end{cases}
\end{align*}
If $\mu=\sum_{i=1}^m \delta_{x_i}$ and $\chi=\sum_{i=1}^{m} \delta_{y_i}$ are finite counting measures on $\R^d$, $d_1$ is alternatively given by
$$
d_1(\mu,\chi)=\min\limits_{\pi \in S_m} \left\{m^{-1} \sum\limits_{i=1}^m d_0\left(x_i,y_{\pi(i)}\right)\right\}.
$$
The metric $d_1$ is sometimes called the {\em Kantorovich$_{1,1}$}- or {\em Wasserstein-metric induced by $d_0$} in the literature, see e.g. \cite{BSY22}.

We define the distance $\mathbf{d_2}$ between two finite point processes $\xi$ and $\nu$ in $\R^d$ by
\begin{align*}
	\mathbf{d_2}(\xi,\nu):=\sup\limits_{f \in \mathcal F}  \left| \EEE{f(\xi)}-\EEE{f(\nu)}\right|,
\end{align*}
where $\mathcal F$ is the class of all 1-Lipschitz functions with respect to $d_1$. By the Kantorovich-Rubinstein duality (see \cite[Theorem 5.10]{Vil08}), we have 
\begin{equation}
\label{eq:KRduality}
	\mathbf{d_2}(\xi,\nu)=\inf\limits_{C \in \Sigma(\xi,\nu)} \int_{\bN \times \bN} d_1\left(\mu_1,\mu_2\right)\,C(\mathrm{d}(\mu_1,\mu_2)),
\end{equation}
where $\Sigma(\xi,\nu)$ is the set of all probability measures on $\bN\times \bN$  with marginal distributions $\P(\xi\in \cdot)$ and $\P(\nu\in \cdot)$.

For finite measures $\mu, \chi$ on $\R^d$ we also write
\begin{align*}
	\widehat{d}_1(\mu,\chi):=
	\begin{cases}
		\inf\limits_{\mu' \le \mu:\,\mu'(\R^d)=\chi(\R^d)}d_1(\mu',\chi),\quad &\mu(\R^d)\ge\chi(\R^d),\\
		\widehat{d}_1(\chi,\mu),\quad &\mu(\R^d)<\chi(\R^d).
	\end{cases}
\end{align*}

\subsection{Technical results}

\subsubsection{A bound for the total variation distance }
Let $A$ be a subset in $\R^d$ such that $A^c$ is compact and contains the origin. In what follows, we write $A_z=A+z$ for any $z \in \R^d$  and assume that the mapping 
\begin{align}
	\R^d \times \mathbf N \to \R^d \times \mathbf N:(z,\omega) \mapsto (z,\omega_{A_z}) \label{def:Ax}
\end{align}
is product measurable. In our applications, the set $A$ will be the complement of a ball centered at the origin. If so, \cite[page 15]{BB92} shows that the mapping in \eqref{def:Ax} is product measurable. 

\begin{lem}\label{lem:mix}
	Let $\xi$ be a simple point process in $\R^d$, let $A,B \subset \R^d$ be compact sets and let $F:\mathbf{N} \to [0,+\infty)$ be a measurable function. For some compact $D \subset \R^d$ and a measurable function $h: (\R^d)^k\times \mathbf N \to \{0,1\}$, let $\zeta$ be the process given at \eqref{def:Xil}.  Then we have for any $p>0,\,q>0$ with $1/p+1/q\le 1$,
	\begin{align*}
		\left|	\int_B	\E	[F(\xi_{A_z}^{z, \zeta})-F(\xi_{A_z})]\,\E[\zeta](\d z)\right| \le  4 \left(\E [\zeta(B)^p] \right)^{\frac 1p} \ \left(\EEE{\sup_{z \in B} F(\xi_{A_z})^q} \right)^{\frac 1q} (\beta^{\xi}_{B+D, B+A})^{1-\frac{1}{p}-\frac 1q}.
	\end{align*}
\end{lem}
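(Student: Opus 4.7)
The plan is to rewrite the left-hand side as a covariance-type quantity between two functionals of $\xi$ on well-separated regions, and to control it by combining Hölder's inequality with a truncation argument that converts the $\beta$-mixing bound (which gives an $L^\infty$-estimate) into the required $L^p$/$L^q$-estimate.

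First, I would apply the defining property of the Palm version $\xi^{z,\zeta}$ with the test function $f(z,\mu)=F(\mu_{A_z})\one\{z\in B\}$, yielding
$$\int_B \E[F(\xi_{A_z}^{z,\zeta})]\,\E[\zeta](\d z) = \E\Bigl[\int_B F(\xi_{A_z})\,\zeta(\d z)\Bigr].$$
Thus the quantity to estimate is
$$\Delta := \E\Bigl[\int_B F(\xi_{A_z})\,\zeta(\d z)\Bigr] - \int_B \E[F(\xi_{A_z})]\,\E[\zeta](\d z),$$
which has the form $\E[H(\xi_{B+D},\xi_{B+A})] - \E_{P_1\otimes P_2}[H]$, where $H := \int_B F(\xi_{A_z})\,\zeta(\d z)$. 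Here I would use that $\zeta$ restricted to $B$ is $\sigma(\xi_{B+D})$-measurable (by the construction in \eqref{def:Xil}, since for $\mathbf{x}$ contributing at $z(\mathbf{x})\in B$ we have $D_{z(\mathbf{x})}\subseteq B+D$) and that $z\mapsto F(\xi_{A_z})$ restricted to $z\in B$ is $\sigma(\xi_{B+A})$-measurable via \eqref{def:Ax}.

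Next, by the pointwise inequality $|H|\le \sup_{z\in B}F(\xi_{A_z})\cdot\zeta(B)$ and Hölder with the exponent $s$ defined by $1/s=1/p+1/q\le 1$, I obtain
$$\|H\|_s\le \|\zeta(B)\|_p\,\bigl\|\sup_{z\in B}F(\xi_{A_z})\bigr\|_q,$$
and the same bound holds under the product measure $P_1\otimes P_2$ because the marginals coincide. For the core estimate I would split, given $M>0$, $H = H\one\{|H|\le M\} + H\one\{|H|>M\}$. The normalisation in the definition of $\beta^\xi_{B+D,B+A}$ makes the total-variation norm of the signed measure $P_\mathrm{joint}-P_\mathrm{prod}$ equal to $2\beta^\xi_{B+D,B+A}$, so the bounded part contributes at most $2M\beta^\xi_{B+D,B+A}$. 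Using the pointwise bound $|H|\one\{|H|>M\}\le |H|^s/M^{s-1}$ together with the $L^s$-estimate above, the tail contribution under each law is at most $\|H\|_s^s/M^{s-1}$, giving $2\|H\|_s^s/M^{s-1}$ in total. Choosing $M=\|H\|_s(\beta^\xi_{B+D,B+A})^{-1/s}$ equalises the two terms and yields
$$|\Delta|\le 4\|H\|_s\,(\beta^\xi_{B+D,B+A})^{1-1/s} \le 4\|\zeta(B)\|_p\bigl\|\sup_{z\in B}F(\xi_{A_z})\bigr\|_q (\beta^\xi_{B+D,B+A})^{1-1/p-1/q},$$
which is precisely the inequality claimed, with the correct exponent $1-1/s = 1-1/p-1/q$ and constant $4$.

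The main obstacle I anticipate is not the quantitative step but the bookkeeping around measurability and localisation: one must ensure that the $k$-tuples $\mathbf{x}$ which contribute to $\zeta$ at points of $B$ actually involve only points of $\xi$ inside $B+D$ (so that $\zeta|_B$ is genuinely $\sigma(\xi_{B+D})$-measurable), which relies on the product measurability assumption \eqref{def:Ax}, on translation covariance of $z$, and on $D$ being adapted to $h$. Once this is set, Palm duality, Hölder, and the truncation-plus-mixing step combine in a standard way.
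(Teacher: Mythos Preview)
Your proposal is correct and follows essentially the same route as the paper: both rewrite the left-hand side via the Palm identity as $\E_{\text{joint}}[H]-\E_{\text{prod}}[H]$ for the bilinear functional $H$, then apply Yoshihara's truncation argument together with H\"older (with the exponent $s=1+\delta$ satisfying $1/s=1/p+1/q$) to extract the factor $(\beta^{\xi}_{B+D,B+A})^{1-1/p-1/q}$ and the constant $4$. The only cosmetic difference is that the paper phrases the $\beta$-mixing step via an explicit maximal coupling $(\xi_{B+D},\xi_{B+A})\leftrightarrow(\tilde\xi_{B+D},\tilde\xi_{B+A})$, whereas you work directly with the total-variation norm of the signed measure $P_{\text{joint}}-P_{\text{prod}}$; and the paper fixes the truncation level from the outset while you optimise over $M$ at the end. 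Your closing remark about needing the $k$-tuples $\mathbf{x}$ with $z(\mathbf{x})\in B$ to lie inside $B+D$ is exactly the localisation issue the paper handles implicitly by writing $H(\mu,\nu)=\sum_{\mathbf{x}\in\mu_{\neq}^k}\dots$ with $\mu=\xi_{B+D}$.
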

Given two point processes $\xi_1, \xi_2$, denote the total variation distance between $\xi_1$ and $\xi_2$ by \[\dtv(\xi_1, \xi_2) = \sup_{\mathbf{A}\in \mathcal{N}}|\PPP{\xi_1\in \mathbf{A}} - \PPP{\xi_2\in \mathbf{A}}|.\] As a direct consequence of the above lemma, we derive  an upper bound for the total variation distance between $\xi$ and its Palm process. 
\begin{prop}
\label{prop:mix}
With the same notation as in Lemma \ref{lem:mix}, if $\xi$ and $\zeta$ are stationary, then for any $p>0,\,q>0$ with $1/p+1/q\le 1$, 

\[
\dtv(\xi_{A}, (\xi^{o,\zeta})_{A})  \le 4 \frac{(\E [\zeta(B)^p])^{\frac 1p}}{\E [\zeta](B)} (\beta^{\xi}_{B+D, B+A})^{1-\frac{1}{p}-\frac 1q}.
\]
\end{prop}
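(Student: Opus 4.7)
The plan is to reduce the total variation distance to a supremum over bounded measurable test functions, convert the difference of expectations into an integral against $\E[\zeta](\d z)$ using stationarity, and then invoke Lemma \ref{lem:mix} with the uniform bound $F \le 1$ to kill the $L^q$-factor. First, I would use the standard identity
\[
\dtv(\xi_A, (\xi^{o,\zeta})_A) = \sup_{F : \mathbf{N} \to [0,1]} \bigl| \E[F((\xi^{o,\zeta})_A)] - \E[F(\xi_A)] \bigr|,
\]
with the supremum over measurable $F$, so that it suffices to control the difference of expectations for a single such $F$.

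Next, for a fixed $F$, I would introduce the shifted family
\[
F_z(\omega) := F(T_{-z}\omega_{A_z}), \qquad z \in \R^d,
\]
where $T_{-z}$ denotes the shift of measures $\omega \mapsto \omega(\cdot + z)$. Stationarity of $\xi$ gives $T_{-z}\xi_{A_z} \overset{d}{=} \xi_A$, so that $\E[F_z(\xi_{A_z})] = \E[F(\xi_A)]$ is independent of $z$. Because $h$ is translation-invariant, $\zeta$ inherits the stationarity of $\xi$, and the refined Campbell theorem applied to stationary Palm calculus yields $T_{-z}\xi^{z,\zeta} \overset{d}{=} \xi^{o,\zeta}$, whence $\E[F_z(\xi^{z,\zeta}_{A_z})] = \E[F((\xi^{o,\zeta})_A)]$ is likewise independent of $z$. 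Integrating the difference against $\E[\zeta](\d z)$ over $B$ produces the identity
\[
\int_B \E\bigl[F_z(\xi^{z,\zeta}_{A_z}) - F_z(\xi_{A_z})\bigr]\, \E[\zeta](\d z) = \E[\zeta](B)\,\bigl( \E[F((\xi^{o,\zeta})_A)] - \E[F(\xi_A)] \bigr).
\]

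I would then apply Lemma \ref{lem:mix} to the family $\{F_z\}_{z \in B}$, noting that $F_z \le 1$ uniformly, which makes $(\E \sup_{z} F_z(\xi_{A_z})^q)^{1/q} \le 1$. This bounds the left-hand side by $4 (\E[\zeta(B)^p])^{1/p} (\beta^\xi_{B+D, B+A})^{1 - 1/p - 1/q}$. Dividing through by $\E[\zeta](B)$ and taking the supremum over $F$ delivers the stated inequality.

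The main (but cosmetic) obstacle is that Lemma \ref{lem:mix} is stated for a single function $F$, not a measurable $z$-dependent family $\{F_z\}_{z \in B}$. However, its proof (a coupling-and-H\"older argument exploiting the $\beta$-mixing between $B+D$ and $B+A$) uses $F$ only via the uniform $L^q$-norm of $\sup_{z \in B} F(\xi_{A_z})$, and therefore applies verbatim when the single $F$ is replaced by a uniformly bounded measurable family. Verifying this extension, together with the stationarity identity $T_{-z}\xi^{z,\zeta} \overset{d}{=} \xi^{o,\zeta}$, are the only genuinely non-routine ingredients.
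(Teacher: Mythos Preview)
Your proposal is correct and matches the paper's intended derivation: the paper gives no separate proof of Proposition~\ref{prop:mix} and simply states that it is ``a direct consequence of the above lemma,'' so what you have written is exactly the argument being suppressed. The only point worth flagging is the one you already flag yourself: Lemma~\ref{lem:mix} is stated for a single $F$, whereas the stationarity reduction forces the shifted family $F_z=F\circ T_{-z}$; since the proof of the lemma only uses $F$ through $H(\mu,\nu)=\sum_{\xx}\one\{z(\xx)\in B\}h(\mu_{D_{z(\xx)}})F(\nu_{A_{z(\xx)}})$ and the bound $(\E[\sup_{z\in B}F(\xi_{A_z})^q])^{1/q}$, the extension to a measurable bounded family is indeed purely cosmetic, and your justification of the Palm-shift identity $T_{-z}\xi^{z,\zeta}\overset{d}{=}\xi^{o,\zeta}$ from joint stationarity of $(\xi,\zeta)$ is the right ingredient.
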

The above inequality generalizes \cite[Theorem 1]{HP13} in the sense that the Palm version of $\xi$ is with respect to $\zeta$ and not with respect to $\xi$. In particular, if $\xi$ is stationary and $D=\{o\}$, we have
\begin{equation}
	\label{rem:mix}
	\dtv(\xi_{A}, (\xi^{o})_{A})  \le 4 \frac{(\E [\xi(B)^p])^{\frac 1p}}{\E [\xi](B)}(\beta^{\xi}_{B, B+A})^{1-\frac{1}{p}-\frac 1q}.
\end{equation}

\begin{proof}[Proof of Lemma \ref{lem:mix}]
	By definition of $\xi^{z,\zeta}$, we have
	\begin{align*}
		\int_B	\E	[F(\xi_{A_z}^{z,\zeta})] \,\E[\zeta](\d z)=\E \Big[\sum_{z\in \zeta} \one\{z \in B\} F( \xi_{A_z})\Big]=\E \Big[\sum_{\xx \in \xi_{\neq}^k} \one\{z(\xx) \in B\}  h(\xx,\xi_{D_{z(\xx)}}) F(\xi_{A_{z(\xx)}})\Big].
	\end{align*} 
	In the same way, we have
	\begin{align*}
		\int_B	\E	[F(\xi_{A_z})] \,\E[\zeta](\d z)=\E \Big[ \sum_{\xx \in \xi_{\neq}^k} \one\{z(\xx) \in B\}  h(\xx,\xi_{D_{z(\xx)}}) \E[F(\xi_{A_{z(\xx)}})] \Big].
	\end{align*} 
	Now, let $\tilde \xi_{B + D}\stackrel{d}{=}\xi_{B+D}$ and $\tilde \xi_{A + B}\stackrel{d}{=}\xi_{A+B}$ be independent point processes. We couple $(\xi_{B+D},\xi_{B+A})$ and $(\tilde \xi_{B+D},\tilde \xi_{B+A})$ in such a way that
	\begin{align}
		\P((\xi_{B+D},\xi_{B+A})\neq (\tilde \xi_{B+D},\tilde \xi_{B+A})) = \dtv((\xi_{B+D},\xi_{B+A}),(\tilde \xi_{B+D},\tilde \xi_{B+A})) =:2\beta_{B+D,B+A}^{\xi}. \label{eqn:coupxi}
	\end{align}
	This is possible due to the coupling lemma \cite[p. 254]{BHJ92}. Because $A_{z(\mathbf{x})}=(A+B)\cap A_{z(\mathbf{x})}$ and  $D_{z(\mathbf{x})}=(B+D)\cap D_{z(\mathbf{x})}$ provided that $z(\mathbf{x})\in B$, we get
\[
		\int_B	\E	[F(\xi_{A_x}^{x,\zeta})-F(\xi_{A_x})]\, \E [\zeta](\d x)=\E [H(\xi_{B + D}, \xi_{A + B})] - \E [H(\tilde \xi_{B + D}, \tilde \xi_{A + B})],
\]
	where $H(\mu,\nu):=\sum_{\mathbf{x} \in \mu_{\neq}^k} \one\{z(\mathbf{x}) \in B\} h\left(\mu_{D_{z(\mathbf{x})}}\right)F\left(\nu_{A_{z(\mathbf{x})}}\right)$. The following argument is adopted from \cite[Lemma 1]{Yos76} to our setting. Let $\delta>0$, $\gamma:=\frac{1}{1+\delta}$ and define
	\begin{align*}
		M:=&\max\left\{\E[H(\xi_{B + D}, \xi_{B + A})^{1+\delta}], \E[H(\tilde \xi_{B + D}, \tilde \xi_{B + A})^{1+\delta}] \right\},\\
		E:=&\left\{(\mu,\nu)\in \mathbf N^2:\, H(\mu,\nu)\le M^{\gamma} (\beta^{\xi}_{B+D,B+A})^{-\gamma}\right\}.
	\end{align*}
	Then we have by the definition of $E$ and \eqref{eqn:coupxi},
	\begin{multline}
		\label{eq:BH}
		\Big| \EEE{H(\xi_{B+D},\xi_{B+A}) \one\{(\xi_{B+D},\xi_{B+A}) \in E\}- H(\tilde \xi_{B+D},\tilde \xi_{B+A}) \one\{(\tilde \xi_{B+D},\tilde \xi_{B+A}) \in E\}} \Big|\\
		\begin{split}
			& \le  M^{\gamma} (\beta^{\xi}_{B+D, B+A})^{-\gamma}\PPP{(\xi_{B+D},\xi_{B+A})\neq (\tilde \xi_{B+D},\tilde \xi_{B+A})}\\
			&  = 2 M^{\gamma} (\beta^{\xi}_{B+D, B+A})^{ \gamma \delta}. 
		\end{split}
	\end{multline}
	
	Moreover, because $1< M^{-\gamma\delta} (\beta^{\xi}_{B+D, B+A})^{\gamma\delta }  H(\xi_{B+D},\xi_{B+A})^{\delta}  $ on the event $E$, we have
	\begin{multline}
		\EEE{H(\xi_{B+D},\xi_{B+A})\one\{(\xi_{B+D},\xi_{B+A}) \notin E\}}\\
		\le  M^{-\gamma\delta} (\beta^{\xi}_{B+D, B+A})^{\gamma\delta } 	\EEE{ H(\xi_{B+D},\xi_{B+A})^{1+\delta}\one\{(\xi_{B+D},\xi_{B+A}) \notin E\}}\le M^{1-\gamma\delta} (\beta^{\xi}_{B+D, B+A})^{\gamma\delta},\label{eq:BcH}
	\end{multline}
	and analogously,
	\begin{align}
		&\EEE{H(\tilde \xi_{B+D},\tilde \xi_{B+A})\one\{(\tilde \xi_{B+D},\tilde \xi_{B+A}) \notin E\}}\le M^{1-\gamma\delta} (\beta^{\xi}_{B+D, B+A})^{\gamma\delta},\label{eq:BcH2}
	\end{align}
	Combining \eqref{eq:BH},  \eqref{eq:BcH}, \eqref{eq:BcH2} and using the fact that $1-\gamma\delta = \gamma = \frac{1}{1+\delta}$, we have
	\begin{multline}
		\Big|\int_B	\E	[F(\xi_{A_x}^{x, \zeta})-F(\xi_{A_x})]\, \E[\zeta](\d x)\Big|\\
		\le 4 \max \{\E [H(\xi_{B+ D}, \xi_{B+A})^{1+\delta}],\E [H(\tilde \xi_{B+D},\tilde \xi_{ B+A})^{1+\delta}] \}^{\frac{1}{1+\delta}} (\beta^{\xi}_{B+D, B+A})^{1-\frac{1}{1+\delta}}. \label{eq:palmmax}
	\end{multline}
	From Hölder's inequality we find for any $p,q \in [1,+\infty]$ with $1/p+1/q=1/(1+\delta)$, 
	\begin{align*}
		(\E [H(\xi_{B + D}, \xi_{B+A})^{1+\delta}])^{\frac{1}{1+\delta}}&\le  \Big(\E \Big[ (\zeta(B))^{1+\delta} \big(\sup_{x \in B} F(\xi_{A_x})\big)^{1+\delta}\Big]\Big)^{\frac{1}{1+\delta}}\\
		&\le \Big(\E [\zeta(B)^p] \Big)^{\frac 1p} \ \Big(\E \Big[\sup_{x \in B} F(\xi_{A_x})^q\Big] \Big)^{\frac 1q}.
	\end{align*}
	Since an analogous bound holds for $(\E |H(\tilde \xi_{B + D}, \tilde \xi_{B+A})|^{1+\delta})^{\frac{1}{1+\delta}}$ as well, we deduce from \eqref{eq:palmmax} that  \[\Big|\int_B	\E	[F(\xi_{A_x}^{x, \zeta})-F(\xi_{A_x})]\, \E[\zeta](\d x)\Big|
	\leq  4 \Big(\E [\zeta(B)^p] \Big)^{\frac 1p} \ \Big(\E \Big[\sup_{x \in B} F(\xi_{A_x})^q\Big] \Big)^{\frac 1q} (\beta^{\xi}_{B+D, B+A})^{1-\frac{1}{p}-\frac 1q}.\]
\end{proof}

\subsubsection{A general upper bound for compound Poisson process approximation}

Now, let $\Xi$ be a simple point process in $\R^d$ and let $C$ be a compact subset in $\R^d$. For any $i\in \N$, we consider below the subprocess
\begin{align*}
	\Xi^{(i)}:=\sum_{z \in \Xi} \one\{\Xi(C_z)=i\} \delta_{z}.
\end{align*}

The following proposition provides an upper bound between $\Xi$ and a compound Poisson point process.

\begin{prop}
	\label{pr:cp_app}
	Let $\Xi$ be a simple finite point process such that $\E[\Xi(\R^d)]<\infty$. Let $A\subset \R^d$ such that $A^c$ is compact, contains the origin and satisfies the measurability condition \eqref{def:Ax}. Let $\mathsf{CP}(\pi_1,\pi_2,\dots)$ be the distribution of a compound Poisson process with parameters $\pi_1$, $\pi_2$, \ldots  Then 
	\begin{multline*}
		\mathbf{d_2}(\mathcal L \Xi, \mathsf{CP}(\pi_1,\pi_2,\dots))\le e^{\E[\Xi(\R^d)]} \Big(\sum_{i \ge 1} \Big\{ |i\pi_{i}(\R^d)- \E[\Xi^{(i)}(\R^d)] | + \min\{\pi_{i}(\R^d), \E[\Xi^{(i)}]/i\} \hat d_1(\pi_{i}, \E[\Xi^{(i)}]/i) \Big\}\\
		+\Big\{ \int_{\R^d} \big( {\sup_{y \in A^c} (d_0(y,z))}+2\E[\Xi(A_z^c) ] \big)\, \E[\Xi](\d z)+2\sum_{i \ge 1} \int_{\R^d} \dtv((\Xi^{zi})_{A_z},\Xi_{A_z}) \, \E[\Xi^{(i)}](\d z)\Big\}\Big).
	\end{multline*}
\end{prop}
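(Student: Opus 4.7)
The plan is to combine a cluster-collapsing coupling with a Stein-type bound for Poisson process approximation in the spirit of Barbour--Brown \cite{BB92} and Barbour--Månsson \cite{BM02}.

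First, since each atom $z \in \Xi$ with $\Xi(C_z) = i$ is interpreted as belonging to a cluster of size $i$, I would introduce an auxiliary collapsed process $\widehat\Xi$ in which every cluster of size $i$ is replaced by a single multiplicity-$i$ atom sitting at a chosen cluster representative. By construction $\widehat\Xi$ has the multi-set structure of a compound Poisson process. From the definitions of $d_1$ and $\mathbf{d_2}$, the cost of coupling $\Xi$ and $\widehat\Xi$ is controlled by the maximum displacement needed to move each cluster member to its representative: cluster members located in the local window $A_z^c$ contribute the supremum term $\sup_{y \in A^c} d_0(y,z)$, while cluster members lying outside that window are matched at cost at most $1$ and contribute $2\E[\Xi(A_z^c)]$; integrating against $\E[\Xi]$ by the Campbell formula yields the two purely geometric terms of the bound.

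Next, to compare $\widehat\Xi$ with a compound Poisson process $\psi \sim \mathsf{CP}(\pi_1,\pi_2,\dots)$, I would view both as marked point processes on $\R^d \times \N^*$ (the mark recording the cluster size) and apply a Stein bound for Poisson process approximation. The first Stein factor for compound Poisson approximation in the absence of monotonicity assumptions on the cluster-size distribution blows up exponentially in $\E[\Xi(\R^d)]$, which explains the prefactor $e^{\E[\Xi(\R^d)]}$. The intensity mismatch between $\widehat\Xi^{(i)}$ (whose expected mass is $\E[\Xi^{(i)}(\R^d)]/i$) and $\zeta(\cdot \times\{i\})$ (whose intensity is $\pi_i$) produces, via the definition of $\hat d_1$, the two matching terms $|i\pi_i(\R^d)-\E[\Xi^{(i)}(\R^d)]|$ and $\min\{\pi_i(\R^d),\E[\Xi^{(i)}]/i\}\, \hat d_1(\pi_i, \E[\Xi^{(i)}]/i)$, the factor $i$ arising because $\Xi^{(i)}$ has $i$ atoms per cluster. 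The local dependence term in the Stein bound is then handled by a Palm coupling of $\Xi$ against the cluster-representative subprocess $\widehat\Xi^{(i)}$, and the resulting total-variation quantity is precisely $\dtv((\Xi^{zi})_{A_z}, \Xi_{A_z})$, which produces the remaining sum $2\sum_{i\ge 1}\int_{\R^d}\dtv((\Xi^{zi})_{A_z},\Xi_{A_z})\,\E[\Xi^{(i)}](\d z)$.

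Combining the two estimates via the triangle inequality gives the stated bound. The main technical obstacle is the precise accounting in the Stein step: one must verify that the Palm coupling appearing in the Barbour--Brown-style inequality for the marked process $\widehat\Xi$ yields exactly $\dtv((\Xi^{zi})_{A_z},\Xi_{A_z})$ rather than an a priori different quantity involving the Palm version of $\widehat\Xi^{(i)}$ itself, and that the factor-of-$i$ bookkeeping between $\Xi^{(i)}$ and the cluster-representative subprocess remains consistent throughout. The exponential Stein factor is insensitive to the choice of representative, but the cluster-collapsing cost is not, so one needs to confirm that a canonical representative (for instance, the first atom in a fixed enumeration of $\Xi$) yields the bounds as stated.
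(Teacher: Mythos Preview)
Your strategy is in the right family but does not match the paper's route, and your attribution of the individual error terms is off in a way that would make the actual write-up difficult.

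The paper does \emph{not} build an explicit collapsed process $\widehat\Xi$ and then run a Stein argument from scratch. Instead it sets $i\pi_i'(\d z):=\E[\Xi^{(i)}](\d z)$ and applies \cite[Theorem~4.2]{BM02} directly to $\Xi$ with this data-driven parameter choice. That theorem already delivers a bound of the form
\[
e^{\pi'(\R^d)}\Big\{\int \E[d_1((\Xi^z)_{A_z^c},\Xi^z(A_z^c)\delta_z)]\,\E[\Xi](\d z)+\sum_{i\ge1}\int\big(\P(\tilde\Xi^{zi}(A_z)\neq\tilde\Xi(\R^d))+\E[\hat d_1((\tilde\Xi^{zi})_{A_z},\tilde\Xi)]\big)\,\E[\Xi^{(i)}](\d z)\Big\},
\]
where $(\tilde\Xi^{zi},\tilde\Xi)$ is an optimal TV-coupling. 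The $\sup_{y\in A^c}d_0(y,z)$ term comes from trivially bounding the first integrand; the $2\E[\Xi(A_z^c)]$ term comes from the \emph{second} block, via $\P(\tilde\Xi(A_z^c)>0)\le\E[\Xi(A_z^c)]$ appearing once in each of the two summands there---it is not a cluster-collapsing cost as you suggest. Likewise the TV terms $\dtv((\Xi^{zi})_{A_z},\Xi_{A_z})$ fall out of the same two summands with no need to identify a Palm version of an auxiliary $\widehat\Xi^{(i)}$. Finally the paper compares $\mathsf{CP}(\pi_1',\pi_2',\dots)$ with $\mathsf{CP}(\pi_1,\pi_2,\dots)$ via \cite[Theorem~4.6]{BM02}, which produces exactly the $|i\pi_i(\R^d)-\E[\Xi^{(i)}(\R^d)]|+\min\{\cdots\}\hat d_1(\cdots)$ sum, and the triangle inequality finishes.

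So the ``main technical obstacle'' you flagged---showing that the Palm term for your marked $\widehat\Xi$ reduces to $\dtv((\Xi^{zi})_{A_z},\Xi_{A_z})$---simply does not arise in the paper's argument, because the relevant Palm structure is already built into \cite[Theorem~4.2]{BM02} with $\Xi^{zi}$ defined as the Palm version of $\Xi$ with respect to $\Xi^{(i)}$. Your approach could presumably be pushed through (it is essentially re-deriving that theorem), but you would need to redo the bookkeeping so that the $2\E[\Xi(A_z^c)]$ contribution lands in the right place, and you would have to resolve the Palm identification you yourself identified as open.
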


\begin{proof}[Proof of Proposition \ref{pr:cp_app}]
	Let
	\begin{align*}
		i \pi'_i(\d z) &:=\E[\Xi^{(i)}](\d z).
	\end{align*} 
and $\pi':=\sum_{i \ge 1} \pi_i'$. We first bound $\mathbf{d_2}(\mathcal L \Xi, \mathsf{CP}(\pi'_1,\pi'_2,\dots))$. To do it, let us consider a coupling $(\tilde \Xi^{zi}, \tilde \Xi)$ of $(\Xi^{zi}, \Xi)$ such that $\P(\tilde \Xi^{zi} \neq \tilde \Xi)=\dtv(\Xi^{zi},\Xi)$, where $\Xi^{zi}$ is the Palm version of $\Xi^{(i)}$. According to \cite[Theorem 4.2]{BM02}, we have 
	\begin{multline*}
		\mathbf{d_2}(\mathcal L \Xi, \mathsf{CP}(\pi'_1,\pi'_2,\dots))\le e^{\pi'(\R^d)} \Big\{ \int_{\R^d} \E[d_1((\Xi^z)_{A_z^c}, \Xi^z(A_z^c)\delta_z)]\, \E[\Xi](\d z)\\
		+\sum_{i \ge 1}  \int_{\R^d} \left(\P(\tilde \Xi^{zi}(A_z)\neq \tilde \Xi (\R^d) ) + \E [\hat d_1( (\tilde \Xi^{zi})_{A_z},\tilde \Xi )]\right) \,\E[\Xi^{(i)}](\d z)\Big\}.
		\end{multline*}
	 	Here, we use the bound $d_1((\Xi^z)_{A_z^c}, \Xi^z(A_z^c)\delta_z)\le {\sup_{y \in A_z^c} d_0(z,y)}$ a.s. Moreover, 
	\begin{equation*}
		\P(\tilde \Xi^{zi}(A_z)\neq \tilde \Xi (\R^d)) \le \P(\tilde \Xi (A_z^c)>0) + \P(\tilde \Xi^{zi}(A_z)\neq \tilde \Xi(A_z)) \le \E[\Xi(A_z^c)] + \dtv((\Xi^{zi})_{A_z},\Xi_{A_z}).
	\end{equation*}
	The definition of $\hat d_1$ gives that
	$
	\hat d_1((\tilde \Xi^{zi})_{A_z}, \tilde \Xi) \le \one\{\tilde \Xi_{A_z^c} \neq \es\} + \one\{(\tilde \Xi^{zi})_{A_z} \neq \tilde \Xi_{A_z}\},
	$ 
	from which we get that
\begin{equation*}
	\E \big[\hat d_1((\tilde \Xi^{zi})_{A_z}, \tilde \Xi)\big]  \le \P(\tilde \Xi_{A_z^c} \neq \es) + \P((\tilde \Xi^{zi})_{A_z} \neq \tilde \Xi_{A_z}) \leq \EEE{\Xi(A_z^c)} + \dtv((\Xi^{zi})_{A_z},\Xi_{A_z}). 
\end{equation*}
	Therefore, 
	\begin{multline*}
		\d_2(\mathcal L \Xi, \mathsf{CP}(\pi'_1,\pi'_2,\dots))\le  e^{\pi'(\R^d)} \Big\{ \int_{\R^d} \big( \sup_{y \in A_z^c} d_0(z,y)+2\E[\Xi(A_z^c) ] \big)\, \E[\Xi](\d z)\\
		+2\sum_{i \ge 1} \int_{\R^d} \dtv((\Xi^{zi})_{A_z},\Xi_{A_z}) \, \E[\Xi^{(i)}](\d z)\Big\}.
	\end{multline*}
	
	To complete the argument, we use that by  \cite[Theorem 4.6]{BM02},  
	\[\mathbf{d_2}( \mathsf{CP}(\pi_1,\pi_2,\dots), \mathsf{CP}(\pi_1',\pi_2',\dots))
	\le e^{\pi'(\R^d)} \sum_{i \ge 1} \Big\{i |\pi_{i}(\R^d)- \pi'_{i}(\R^d)| + \min\{\pi_{i}(\R^d), \pi'_{i}(\R^d)\} \hat d_1(\pi_{i}, \pi'_{i}) \Big\}.\]
	The assertion now follows if we plug in the concrete forms of the $\pi_i's$, use the fact that $\pi'(\R^d) \leq  \EEE{\Xi(\R^d)} $ and apply the triangle inequality
	$$
	\mathbf{d_2}(\mathcal L \Xi, \mathsf{CP}(\pi_1',\pi_2',\dots)) \le \mathbf{d_2}(\mathcal L \Xi, \mathsf{CP}(\pi_1,\pi_2,\dots)) + \mathbf{d_2}( \mathsf{CP}(\pi_1,\pi_2,\dots), \mathsf{CP}(\pi_1',\pi_2',\dots)).
	$$
\end{proof}

\section{Process approximation for stabilizing statistics}
\label{sec:stabilizing}
In this section, we derive Poisson and compound Poisson approximations for stabilizing statistics with explicit rates of convergence. Let $\xi$ be a stationary point process with intensity $\gamma>0$ and $q$th correlation function $\rho_q$ for $q \in \N$. Recall that the latter is given by 
the identity:
	\[\E\Big[ \prod_{i\leq q} \xi(A_i)  \Big] = \int_{A_1\times\cdots \times A_q}\rho_k(x_1,\ldots,x_q)\mathrm{d}x_1\ldots \mathrm{d}x_q\] for any pairwise disjoint Borel sets $A_1,\ldots, A_q$. 
For $k \in \N$ let $g_n:(\R^d)^k \times \mathbf{N} \rightarrow \{0,1\}$ and $z:(\R^d)^k\rightarrow \R^d$ be measurable functions such that $g_n$ is translation-invariant and $z$ is translation-covariant. For a given compact set $W \subset \mathbb R^d$, we write $W_n:=n^{1/d}W$ and define
\begin{equation}
	\Xi_n[\omega]= \sum_{\xx \in \omega_{\neq}^k} \one\{z(\xx) \in W_n\}  g_n(\xx,\omega) \delta_{n^{-1/d}z(\xx)},\quad \omega \in \mathbf N, \label{def:xi}
\end{equation}
When $\omega=\xi$, we simply write $\Xi_n := \Xi_n[\xi]$ and refer to the latter as the \textit{point process of exceedances} in $W$. Due to the stationarity of $\xi$ and translation-invariance of $g_n$, we find that the intensity measure $\E[\Xi_n]$ of $\Xi_n$ is given by
$$
\E[\Xi_n](\d z)=n  \Big(\int_{(\R^d)^k} \one\{z(\xx) \in W\} \E[g_n(\xx,\xi^{\xx})]\rho_k(\xx) \d \xx\Big) \one\{z \in W\}\d z=:n \g_n \one\{z \in W\} \d z.
$$
We assume that
	\begin{align}
	\g_n>0,\quad n \in \N.\tag{{\bf M}} \label{eq:m}\nonumber
	\end{align}
Second, we require that $g_n$ is {\em exponentially stabilizing} in the spirit of \cite{BSY22}. To make this precise,  we assume that there is a \emph{stabilization radius}  $R_n(z(\xx),\om) \in [r(\xx),+\infty]$, where $r(\mathbf{x}) = \max_{i=1,\ldots, k}||x_i-z(\mathbf{x})||$; that is 
$$ g_n(\xx, \om) = g_n(\xx, \om \cap B_{R_n(z(\xx), \om)}(z(\xx))) 
$$  holds for any $\om \in \mathbf{N}$ and $\xx = (x_1,\ldots, x_k)\in \om_{\neq}^k$, where $B_r(z)$ denotes the (closed) ball centered at $z$ with radius $r$ for any $z\in \R^d$ and $r>0$. The event $\{R_n(o,\om)\le r\}$ is supposed to be measurable with respect to the $\sigma$-field $\mathcal{N}_{B_r(o)}$. We also assume for any $\xx\in \R^d$ that $\mathcal{S}_n(\xx,\cdot):\mathbf{N}\to \mathcal{F},\, \om \mapsto B_{R_n(\xx,\om)}(z(\xx))$ is a stopping set, i.e.\ for any compact sets $S \subseteq \R^d$, 
\begin{align*}
	\{\omega \in \mathbf{N} :\,\mathcal{S}_n(\xx,\omega)\subseteq S \}= 	\{\omega \in \mathbf{N} :\,\mathcal{S}_n(\xx,\omega \cap S)\subseteq S \},
\end{align*}
where $ \mathcal{F}$ denotes the set of closed subsets in $\R^d$.

\subsection{Poisson approximation}
In this section, we impose that the stabilization radius satisfies the following property: for any sequence $(r_n)$ such that $r_n\geq (\log n)^{1/2+\varepsilon}$ for some $\varepsilon>0$ and for $n$ large enough, 
\begin{equation}
	-	\limsup_{n\to \infty}\,r_n^{-1} \log \Big(\int_{(\R^d)^k}\one\{z(\xx)\in [0,1]^d\}\P(R_n(z(\xx),\xi^{\xx}) > r_n) \rho_k(\xx) \d \xx \Big) =:c_{\ms{es}}>0.\tag{{\bf S}} \label{eq:s}\nonumber
\end{equation}

\begin{thm}\label{Th:Poi} 	Let $W$ be a compact subset in $\R^d$. Let $g_n:(\R^d)^k\times \mathbf N \to \{0,1\}$ be measurable and translation-invariant and let $\xi$ be a stationary point process on $\R^d$ that satisfies the $\mathsf{EDD}$ property, has intensity $\gamma>0$ and $q$th correlation function $\rho_q$ for $1 \le q \le 2k$. We assume that \eqref{eq:m} and \eqref{eq:s} hold, define $\Xi_n := \Xi_n[\xi]$ by $\eqref{def:xi}$ and let $\pi$ be a finite measure on $W$. Then
		\begin{multline*}
			\mathbf{d_2}(\mathcal L \Xi_n,\mathsf{Po}(\pi)) \le  \hat d_1(n \g_n \mathsf{Leb}_W,\pi)+  e^{n\g_n |W| } \Big(c_1 \g_n(\log n)^d +\frac{c_2}{n}\\
			+\sum_{\ell=1}^{k} \gamma^{k+\ell}  \int \one\{z(\xx) \in W_n,\,z(\xx_{k-\ell},\yy) \in B_{b_n}(z(\xx))\}  \E[g_n(\xx,\xi^{\xx,\yy}) g_n((\xx_{k-\ell},\yy),\xi^{\xx,\yy})]  \,\rho_{k+\ell}(\xx,\yy) \,\d (\xx, \yy) \Big),
		\end{multline*}
	where the integral is over $(\R^d)^k\times (\R^d)^{\ell}$, $\xx_{k-\ell}:=(x_1,\dots,x_{k-\ell})$,  $\mathsf{Leb}_W$ denotes the  Lebesgue measure on $W$,  $b_n:=[\theta_2^{-1}(4+2\theta_0/d) +4c_{\ms{es}}^{-1}]\log n $ and $c_1, c_2$ are positive constants that do not depend on $\xi$ and $g_n$.
\end{thm}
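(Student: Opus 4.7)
The plan is to combine Proposition \ref{pr:cp_app}, which converts the process-approximation bound into a small number of explicit expectations, with Proposition \ref{prop:mix}, which controls the critical Palm total-variation term via $\beta$-mixing, and with the stabilization hypothesis \eqref{eq:s}, which permits localization throughout. First I would specialize Proposition \ref{pr:cp_app} to the Poisson case by taking $\pi_1:=n\g_n\mathsf{Leb}_W$, $\pi_i:=0$ for $i\ge 2$, and $C:=\{o\}$, so that $\Xi_n^{(1)}=\Xi_n$ and $\Xi_n^{(i)}=0$ for $i\ge 2$. The ``intensity-mismatch'' contributions in the first bracket of Proposition \ref{pr:cp_app} then vanish, and after a Poisson--Poisson triangle-inequality step (in the spirit of \cite[Theorem~4.6]{BM02}) the problem reduces to controlling the three summands of the second bracket, at the additive cost of the $\hat d_1(n\g_n\mathsf{Leb}_W,\pi)$ appearing in the statement.

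I would take $A^c:=B_{n^{-1/d}b_n}(o)$ in the rescaled window, equivalently a ball of radius $b_n$ around the origin in original coordinates. The specific radius $b_n=[\theta_2^{-1}(4+2\theta_0/d)+4c_{\ms{es}}^{-1}]\log n$ is calibrated so that (i) the stabilization-tail probability $\PPP{R_n(z(\xx),\xi^{\xx})>b_n/4}$ from \eqref{eq:s} is of order $1/n$, and (ii) the EDD-mixing coefficient $\beta^\xi_{B+D,B+A}$ with $B$ of diameter $\sim b_n$ decays like $e^{-\theta_2 b_n}$ fast enough to absorb the polynomial diameter factors arising later. With this choice the two ``easy'' summands of Proposition \ref{pr:cp_app}, namely $\int_W \sup_{y\in A_z^c}d_0(z,y)\,\E[\Xi_n](\d z)$ and $\int_W 2\,\E[\Xi_n(A_z^c)]\,\E[\Xi_n](\d z)$, are bounded directly using the intensity of $\Xi_n$ together with the radius and volume of $A^c$, yielding the $c_2/n$ and $c_1\g_n(\log n)^d$ contributions respectively.

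The technical heart of the proof is the remaining term
$$\int_W 2\,\dtv\!\big((\Xi_n^{z1})_{A_z},(\Xi_n)_{A_z}\big)\,\E[\Xi_n](\d z),$$
and this is where Proposition \ref{prop:mix} enters. The multivariate Mecke formula identifies $\Xi_n^{z}$ with $\Xi_n[\xi^{\xx}]$ for a generating tuple $\xx\in\xi_{\neq}^k$ satisfying $z(\xx)=n^{1/d}z$. Exponential stabilization then lets me replace $(\Xi_n)_{A_z}$ (respectively $(\Xi_n[\xi^{\xx}])_{A_z}$) by a function of $\xi$ (respectively $\xi^{\xx}$) read off only outside an enlarged ball around $z(\xx)$, at a cost of the stabilization tail which is absorbed into the $c_2/n$ term. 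After this localization, Proposition \ref{prop:mix} yields a bound involving $(\E[\zeta(B)^p])^{1/p}(\beta^\xi_{B+D,B+A})^{1-1/p-1/q}/\E[\zeta](B)$, in which EDD provides the required exponential decay; expanding the moment $\E[\zeta(B)^p]$ in terms of the correlation functions of $\xi$ and classifying pairs of $k$-tuples by the number $\ell\in\{1,\dots,k\}$ of non-overlapping atoms produces precisely the correlation sum $\sum_{\ell=1}^{k}\g^{k+\ell}\int(\cdots)\rho_{k+\ell}(\xx,\yy)\,\d(\xx,\yy)$ of the theorem.

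The main obstacle I expect is this last coupling/moment step: faithfully relating the Palm version of $\Xi_n$ to a Palm version of the underlying $\xi$ through the $k$-tuple structure, and then combining exponential stabilization with EDD mixing so that the polynomial diameter factors from Proposition \ref{prop:mix}, the stabilization tail from \eqref{eq:s}, and the moment-expansion contributions are simultaneously tamed by the explicit choice of $b_n$. The combinatorial bookkeeping in classifying overlapping $k$-tuples by their shared atoms ($k-\ell$ shared, $\ell$ new) is another delicate point that must be handled carefully to reproduce the integral in the statement exactly.
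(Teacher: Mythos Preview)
Your architecture is right --- Proposition \ref{pr:cp_app} plus Proposition \ref{prop:mix} plus stabilization plus EDD --- but you have misidentified where the correlation sum in the statement actually comes from, and this misattribution hides a real gap.

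In the paper, Proposition \ref{pr:cp_app} is applied with $A_z=\{z\}^c$, not with a ball. With that choice both ``easy'' summands $\sup_{y\in A_z^c}d_0(y,z)$ and $\E[\Xi_{n,\mathsf{tr}}(A_z^c)]$ are identically zero, and \emph{all} the work lies in the Palm total-variation term $\dtv((\Xi_{n,\mathsf{tr}}^{z})_{\{z\}^c},(\Xi_{n,\mathsf{tr}})_{\{z\}^c})$. This is then split into a near part and a far part by introducing the ball $B_{a_n}(z)$: the near part yields two contributions, $\E[\Xi_{n,\mathsf{tr}}(B_{a_n}(z))]$ (which gives the $c_1\gamma_n(\log n)^d$ term) and the Palm--local term $\E[\Xi_{n,\mathsf{tr}}^{z!}(B_{a_n}(z))]$. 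It is this last quantity --- the expected number of \emph{other} exceedances in $B_{a_n}(z)$ under the Palm measure --- that, after a multivariate Mecke expansion and classification of overlapping $k$-tuples by the number $\ell$ of new atoms, produces exactly the integral $\sum_{\ell=1}^{k}\gamma^{k+\ell}\int(\cdots)\rho_{k+\ell}(\xx,\yy)\,\d(\xx,\yy)$. Proposition \ref{prop:mix} is applied only to the far part $\dtv((\Xi_{n,\mathsf{tr}}^{z})_{B_{a_n}(z)^c},(\Xi_{n,\mathsf{tr}})_{B_{a_n}(z)^c})$, with $p=2$, $q=\infty$ and $B$ a unit ball; the moment factor $(\E[\zeta(B)^2])^{1/2}$ there is bounded crudely and the whole term is absorbed into $c_2/n$ thanks to the exponential decay of the mixing coefficient. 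Your proposal reverses this: you expect the correlation sum to emerge from expanding $\E[\zeta(B)^p]$ in Proposition \ref{prop:mix}, but that moment is not expanded at all --- it sits harmlessly as a constant multiplied by a tiny $\beta$-coefficient.

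A secondary issue is your choice $A^c=B_{n^{-1/d}b_n}(o)$ directly in Proposition \ref{pr:cp_app}. Then $\int_W \sup_{y\in A_z^c}d_0(z,y)\,\E[\Xi_n](\d z)$ is of order $n^{-1/d}b_n\cdot n\gamma_n|W|$, which for bounded $n\gamma_n$ is $\sim(\log n)\,n^{-1/d}$; for $d\ge 2$ this is \emph{not} $O(1/n)$ and would spoil the stated bound. Taking $A_z=\{z\}^c$ first and only afterwards inserting the ball in a near/far splitting of the Palm TV is what makes the bound come out cleanly.
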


\begin{proof}
For $s_n:=2c_{\ms{es}}^{-1}\log n$, we define $\tilde g_n(\mathbf{x},\omega):=g_n(\mathbf{x},\omega) \one\{R_n(z(\mathbf{x}),\omega)\le s_n\}$ as well as the truncated process \[\Xi_{n,\mathsf{tr}}[\omega]:=\sum_{\xx \in \omega_{\neq}^k} \one\{z(\xx) \in W_n\} \tilde g_n(\xx,\omega)  \delta_{n^{-1/d}z(\xx)}\] and set $\Xi_{n,\mathsf{tr}}:=\Xi_{n,\mathsf{tr}}[\xi]$. By the triangle inequality for the $\mathbf{d_2}$ distance, we have
	\begin{align*}
		\mathbf{d_2}(\mathcal L \Xi_n, \mathsf{Po}(\pi)) & \le \mathbf{d_2}(\mathcal L \Xi_n, \mathcal L \Xi_{n,\sf{tr}}) + 	\mathbf{d_2}(\mathcal L \Xi_{n,\mathsf tr}, \mathsf{Po}(\E[\Xi_{n,\mathsf{tr}}])) +  	\mathbf{d_2}(\mathsf{Po}(\E[\Xi_{n,\mathsf{tr}}]), \mathsf{Po}(\pi)). 
	\end{align*}
	Since  $\Xi_{n,\sf{tr}}(A) \le \Xi_n(A)$ for all Borel sets $A \subset \R^d$, we obtain from \eqref{eq:KRduality} and assumption \eqref{eq:s} that
\begin{align*}
\mathbf{d_2}(\mathcal L \Xi_n, \mathcal L \Xi_{n,\sf{tr}}) & \leq 	\E[\hat d_1(\Xi_n,\Xi_{n,\sf{tr}})] \\
& \le \E[\Xi_n(\R^d)] - \E[\Xi_{n,\sf{tr}}(\R^d)]\\
	&=\int_{(\R^d)^k} \one\{z(\xx) \in W_n\} \E[g_n(\xx,\xi^{\xx}) \one\{R_n(z(\xx),\xi^{\xx})>s_n\}] \rho_k(\xx) \d \xx\\
	& \leq  |W_n| \int_{(\R^d)^k} \one\{z(\xx) \in [0,1]^d\} \P(R_n(z(\xx),\xi^{\xx})>s_n) \rho_k(\xx) \d \xx\\
	&  \le \frac{\alpha_1}{n},
\end{align*}
for some $\alpha_1>0$, where we have used the concrete form of $s_n$. Moreover,
\begin{align*}
\mathbf{d_2}(\mathsf{Po}(\E[\Xi_{n,\mathsf{tr}}]), \mathsf{Po}(\pi))&\le \mathbf{d_2}(\mathsf{Po}(\E[\Xi_{n,\mathsf{tr}}]), \mathsf{Po}(\E[\Xi_{n}])) +\mathbf{d_2}(\mathsf{Po}(\E[\Xi_{n}]), \mathsf{Po}(\pi))\\
&\le \E[\Xi_n(\R^d)] - \E[\Xi_{n,\sf{tr}}(\R^d)] + \hat d_1(\E[\Xi_n],\pi)\\
& \leq \frac{\alpha_1}{n} + \hat d_1(n \g_n \mathsf{Leb}_W,\pi). 
\end{align*}
In what follows, we provide an upper bound for $\mathbf{d_2}(\mathcal L \Xi_{n,\mathsf{tr}}, \mathsf{Po}(\EEE{\Xi_{n,\mathsf{tr}}}))$. To do it, we apply Proposition \ref{pr:cp_app} with $A_z:=\{z\}^c$.   This gives
	\begin{multline}
		\mathbf{d_2}(\mathcal L \Xi_{n,\mathsf{tr}}, \mathsf{Po}(\EEE{\Xi_{n,\mathsf{tr}}}))\\
		\begin{split}
		& = \mathbf{d_2}(\mathcal L \Xi_{n,\mathsf{tr}}, \mathsf{CP}(\E[\Xi_{n,\mathsf{tr}}],0,0,\dots))\\
		& \le 2e^{\E[\Xi_{n,\mathsf{tr}}(\R^d)]} \Big(\int_W \E[\Xi_{n,\mathsf{tr}}(\{z\}) ] \, \E[\Xi_{n,\mathsf{tr}}](\d z)
		+ \int_W \dtv((\Xi_{n,\mathsf{tr}}^{z})_{\{z\}^c},(\Xi_{n,\mathsf{tr}})_{\{z\}^c}) \, \E[\Xi_{n,\mathsf{tr}}](\d z)\Big).
		\end{split}
		\label{eq:poid2boucom}
	\end{multline}
	Due to the stationarity of $\xi$ and translation-invariance of $g_n$, we have $\E[\Xi_{n,\mathsf{tr}}] (\{z\})=0$ for every $z \in \R^d$, implying that the first integral on the right-hand side vanishes. We let $a_n:=n^{-1/d}b_n$ and bound the total variation distance in the second integral by
	\begin{multline*}
		\dtv((\Xi_{n,\mathsf{tr}}^{z})_{\{z\}^c},(\Xi_{n,\mathsf{tr}})_{\{z\}^c})\\
		\begin{split} &\le \E[\Xi_{n,\mathsf{tr}}(B_{a_n}(z)\setminus \{z\})] +  \E[\Xi_{n,\mathsf{tr}}^z(B_{a_n}(z)\setminus \{z\})] + \dtv((\Xi_{n,\mathsf{tr}}^{z})_{B_{a_n}(z)^c},(\Xi_{n,\mathsf{tr}})_{B_{a_n}(z)^c}) \\
		& = \E[\Xi_{n,\mathsf{tr}}(B_{a_n}(z))] +  \E[\Xi_{n,\mathsf{tr}}^{z!}(B_{a_n}(z))] + \dtv((\Xi_{n,\mathsf{tr}}^{z})_{B_{a_n}(z)^c},(\Xi_{n,\mathsf{tr}})_{B_{a_n}(z)^c}),
		\end{split}
	\end{multline*}
	where $\Xi^{z!}:=\Xi^z\setminus \{z\}$. Here, we have for some constant $\alpha_2>0$,
	$$
	\E[\Xi_{n,\mathsf{tr}}(B_{a_n}(z))]\leq n\g_n |B_{a_n} \cap W|\le \alpha_2\, \gamma_n (\log n)^d.
	$$
Moreover,
	\begin{multline*}
	\int_{W} \E[\Xi_{n,\mathsf{tr}}^{z!}(B_{a_n}(z))] \E[\Xi_{n,\mathsf{tr}}](\d z) = \sum_{\ell=1}^{k} \EE\Big[\sum_{\mathbf{x}\in (\R^d)^k}\sum_{\mathbf{y}\in (\R^d)^{\ell}}  \one\{z(\xx) \in W_n\} \one\{z(\xx_{k-\ell},\yy) \in B_{n^{1/d}a_n}(z(\xx))\} \\ \times \tilde g_n(\xx,\xi) \tilde g_n((\xx_{k-\ell},\yy),\xi) \Big],
	\end{multline*}
	which gives
	\begin{multline*}
	\int_{W} \E[\Xi_{n,\mathsf{tr}}^{z!}(B_{a_n}(z))] \E[\Xi_{n,\mathsf{tr}}](\d z)=\sum_{\ell=1}^{k} \gamma^{k+\ell} \int\limits_{(\R^d)^k\times (\R^d)^{\ell}} \one\{z(\xx) \in W_n\} \one\{z(\xx_{k-\ell},\yy) \in B_{n^{1/d}a_n}(z(\xx))\} \\ \times \E[\tilde g_n(\xx,\xi^{\xx,\yy}) \tilde g_n((\xx_{k-\ell},\yy),\xi^{\xx,\yy})]  \,\rho_{k+\ell}(\xx,\yy) \,\d (\xx, \yy).
	\end{multline*}
	For $z \in W$ let $\xi^{z,\Xi_{n,\mathsf{tr}}}$ be a Palm version of $\xi$ at $z$ with respect to $\Xi_{n,\mathsf{tr}}$. Then we have $\Xi_{n,\mathsf{tr}}^{z} \stackrel{d}{=}\Xi_{n,\mathsf{tr}}[\xi^{z,\Xi_{n,\mathsf{tr}}}]$. Since $R_n$ is a stabilization radius for $\tilde{g}_n$ and since $\tilde{g}_n(z(\mathbf{x}), \omega)=0$ when $R_n(z(\mathbf{x}),\omega)>s_n$, we know from the stopping set property of $\mathcal S_n$, applied to $S=B_{s_n}(W_n\setminus B_{n^{1/d}a_n}(z))$, that $(\Xi_{n,\mathsf{tr}}^{z})_{B_{a_n}(z)^c}$ is measurable with respect to $(\xi^{z,\Xi_{n,\mathsf{tr}}})_{B_{s_n}(W_n\setminus B_{n^{1/d}a_n}(z))}$ (and analogously for $(\Xi_{n,\mathsf{tr}})_{B_{a_n}(z)^c}$), where $B_s(A):=A+B_s$ for some Borel set $A \subset \R^d$ and $s>0$, with $B_s:=B_s(o)$. This allows us to bound the total variation distance in \eqref{eq:poid2boucom} with $B_n^-:=B_{n^{1/d}w}\setminus B_{b_n}$, where $w:=\diam(W)$, by
	\begin{align*}
		\dtv((\Xi_{n,\mathsf{tr}}^{z})_{B_{a_n}(z)^c},(\Xi_{n,\mathsf{tr}})_{B_{a_n}(z)^c})& \le \dtv((\xi^{z,\Xi_{n,\mathsf{tr}}})_{B_{s_n}(W_n\setminus B_{n^{1/d}a_n}(z))}, \xi_{B_{s_n}(W_n\setminus B_{n^{1/d}a_n}(z))})\\
		&\le \dtv((\xi^{z,\Xi_{n,\mathsf{tr}}})_{B_{s_n}(z+B_n^-)}, \xi_{B_{s_n}(z+B_n^-)})\\
		&=\dtv((\xi^{o,\Xi_{n,\mathsf{tr}}})_{B_{s_n}(B_n^-)}, \xi_{B_{s_n}(B_n^-)}),
	\end{align*}
	 where the last equality follows from the stationarity of $\xi$. Let $D_{\xx}=B_{s_n}(z(\xx))$ and $h_n(\xx,\om)=\one\{z(\xx) \in W_n\}\tilde{g}_n(\xx,\om)  $. Since $h_n(\xx,\xi)$ only depends on $\xi_{D_{\xx}}$, we have
	$$\Xi_{n,\mathsf{tr}} =\sum_{\xx \in  \xi_{\neq}^k} h_n(\xx,\xi_{D_{\xx}}) \delta_{n^{-1/d}z(\xx)}.
	$$

Then we obtain from Proposition \ref{prop:mix}  with $p=2$, $q=\infty$, $B=B_1$, $A:=B_{s_n}(B_n^-)$ and from the $\mathsf{EDD}$ property of $\xi$ that \label{p:boundTVpoisson}
	\begin{multline}
	\dtv((\xi^{o,\Xi_{n,\mathsf{tr}}})_{B_{s_n}(B_n^-)}, \xi_{B_{s_n}(B_n^-)})\\
\begin{split}
	&\le \frac{4(\E[{\Xi_{n,\mathsf{tr}}}(B)^2])^{1/2}}{\E[\Xi_{n,\mathsf{tr}}(B)]} \beta(\xi_{B_{s_n+1}},\xi_{B_{s_n+1}(B_n^-)})^{1/2}\nonumber\\
	& \le \frac{4(\E[\Xi_{n,\mathsf{tr}}(B)^2])^{1/2}}{\E[\Xi_{n,\mathsf{tr}}(B)]} \left(\theta_1 [(wn^{1/d}+s_n+1)(s_n+1) ]^{\theta_0} e^{-\theta_2 (b_n-2s_n-2)}\right)^{1/2}.
	\end{split}
	\end{multline}
	This gives, for some positive constant $\alpha_3$ and for $n$ large enough, 
	\begin{multline*}
		\int_{W} \dtv((\Xi_{n,\mathsf{tr}}^{z})_{B_{a_n}(z)^c},(\Xi_{n,\mathsf{tr}})_{B_{a_n}(z)^c}) \, \E[\Xi_{n,\mathsf{tr}}](\d z)\\
	\begin{split}	
	&\le \frac{\E[\Xi_{n,\mathsf{tr}}(W)]}{\E[\Xi_{n,\mathsf{tr}}(B)]}\frac{4e^{\theta_2}\left(\theta_1 [(wn^{1/d}+s_n+1)(s_n+1) ]^{\theta_0}\right)^{1/2}}{n^{2+\theta_0/d}} (\E[\Xi_{n,\mathsf{tr}}(B)^2])^{1/2}\\
	& \le \frac{\alpha_3}{n},
	\end{split}
	\end{multline*}
which concludes the proof of Theorem \ref{Th:Poi}.
	\end{proof}

\subsection{Compound Poisson approximation}

Next we consider compound Poisson process approximation. Given a sequence of Borel sets $C_n \subset \R^d$ with $o \in C_n,\, n \in \N$, we consider the processes 
\begin{align} \label{eqn:xiell}
	\Xi_n^{(i)}:=\sum_{z \in \Xi_n} \one\{\Xi_n((C_n)_z)=i\} \delta_z=\sum_{\xx \in \xi_{\neq}^k} \one\{z(\xx)\in W_n\} \one\{\Xi_n((C_n)_{z(\xx)})=i\} g_n(\xx,\xi) \delta_{n^{-1/d}z(\xx)},\quad i \ge 1,
\end{align}
where $(C_n)_x:=x+C_n$. In particular, we have $\Xi_n=\sum_{i \ge 1} \Xi_n^{(i)}$. 

We need a slightly stronger stabilization condition than for Poisson approximation and impose that the stabilization radius satisfies the following property: for any $\ell=0,\dots,k$ and for any sequence $(r_n)$ such that $r_n\geq (\log n)^{1/2+\varepsilon}$ for some $\varepsilon>0$ and for $n$ large enough, 
\begin{multline*}
	-	\limsup_{n\to \infty}\,r_n^{-1} \log \Big(\int_{(\R^d)^{k+\ell}}   \one\{z(\xx)\in W_n\} \one\{z(\xx_{k-\ell},\yy)\in W_n \cap (C_n)_{z(\xx)}\}\\
	\times  \EEE{g_n(\xx,\xi^{\xx,\yy}) g_n((\xx_{k-\ell},\yy),\xi^{\xx,\yy}) \one\{R_n(z(\xx),\xi^{\xx,\yy}) > r_n\}} \rho_k(\xx,\yy) \d (\xx,\yy) \Big) =:c_{\ms{es}}>0.\tag{{\bf S*}} \label{eq:s*}\nonumber
\end{multline*}
In particular, when $\ell=0$, the above equation is equivalent to $\mathbf{(S)}$. Moreover, we let $c_n:=\ms{diam}(C_n),\, n \in \N,$ and assume that
\begin{align}
\limsup_{n\to \infty} \frac{c_n}{\log n}=:c_{\ms{c}} \in [0,+\infty).\tag{{\bf C}} \label{eq:c}\nonumber
\end{align}

\begin{thm} \label{Th:CP}
	Let $g_n:(\R^d)^k\times \mathbf N \to \{0,1\}$ be measurable and translation-invariant and let $\xi$ be a stationary point process on $\R^d$ that satisfies the $\mathsf{EDD}$ property, has intensity $\gamma>0$ and $q$th correlation function $\rho_q$ for $1 \le q \le 2k$. Assume that \eqref{eq:m}, \eqref{eq:c} and \eqref{eq:s*} hold and define $\Xi_n:= \Xi_n[\xi]$ by $\eqref{def:xi}$. Let $\mathsf{CP}(\pi_1,\pi_2,\dots)$ be the distribution of a compound Poisson process with parameters $\pi_1$, $\pi_2$, \ldots  Then 
	\begin{multline*}
		\mathbf{d_2}(\mathcal L \Xi_n,\mathsf{CP}(\pi_1,\pi_2,\dots)) 
		\le   e^{n \g_n |W|} \Big(c_1n \g_n^2(\log n)^d +\frac{c_2 (\log n)^d}{n}\\
		+\sum_{i \ge 1} \Big\{ |i\pi_{i}(\R^d)- \E[\Xi_n^{(i)}(\R^d)] | + \min\{\pi_{i}(\R^d), \E[\Xi_n^{(i)}(\R^d)]/i\} \hat d_1(\pi_{i},\E[\Xi_n^{(i)}]/i) \Big\}\Big),
	\end{multline*}
	where $c_1,c_2$ are positive constants that do not depend on $\xi$ and $g$.
\end{thm}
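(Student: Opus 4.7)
The plan is to mirror the strategy of the proof of Theorem \ref{Th:Poi}, with the key change that the ``cluster neighborhood'' $A^c$ in Proposition \ref{pr:cp_app} is taken to be the translate $n^{-1/d}C_n$ rather than the singleton $\{o\}$ appropriate for Poisson approximation.

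First I would truncate: set $s_n := 2c_{\ms{es}}^{-1}\log n$, $\tilde g_n(\xx,\omega) := g_n(\xx,\omega)\one\{R_n(z(\xx),\omega) \le s_n\}$, and let $\Xi_{n,\mathsf{tr}}$, $\Xi_{n,\mathsf{tr}}^{(i)}$ be the truncated analogues of $\Xi_n$, $\Xi_n^{(i)}$. By the triangle inequality for $\mathbf{d_2}$,
\[
\mathbf{d_2}(\mathcal L \Xi_n, \mathsf{CP}(\pi_1,\pi_2,\dots)) \le \mathbf{d_2}(\mathcal L \Xi_n, \mathcal L \Xi_{n,\mathsf{tr}}) + \mathbf{d_2}(\mathcal L \Xi_{n,\mathsf{tr}}, \mathsf{CP}(\pi_1,\pi_2,\dots)).
\]
The first summand is $O(1/n)$ by the bound $\E[\Xi_n(\R^d)]-\E[\Xi_{n,\mathsf{tr}}(\R^d)]$ used in the proof of Theorem \ref{Th:Poi}, which relies on (S*) at $\ell = 0$.

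To the second summand I would apply Proposition \ref{pr:cp_app} to $\Xi_{n,\mathsf{tr}}$ with $A^c := n^{-1/d}C_n$, so that $A_z^c = z+n^{-1/d}C_n$ is precisely the rescaled cluster neighborhood of $z$ and $o \in A^c$. The resulting bound decomposes into four categories. The ``width'' term $\int_W \sup_{y \in A^c}d_0(z,y)\,\E[\Xi_{n,\mathsf{tr}}](dz) \le n^{-1/d}c_n\cdot n\gamma_n|W|$ is $O(\gamma_n \log n)$ by (C), absorbed into the dominant bound. The ``two-point'' term $\int_W \E[\Xi_{n,\mathsf{tr}}(A_z^c)]\,\E[\Xi_{n,\mathsf{tr}}](dz)$ equals $n\gamma_n|W|\cdot\E[\Xi_{n,\mathsf{tr}}(n^{-1/d}C_n)]$ by stationarity, and is bounded by $c_1 n\gamma_n^2(\log n)^d$ via (C) since $\E[\Xi_{n,\mathsf{tr}}(n^{-1/d}C_n)] \le \gamma_n c_n^d$. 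The ``first-sum'' contributions $\sum_i\{|i\pi_i(\R^d) - \E[\Xi_{n,\mathsf{tr}}^{(i)}(\R^d)]| + \min\{\cdots\}\hat d_1(\pi_i,\E[\Xi_{n,\mathsf{tr}}^{(i)}]/i)\}$ are converted to the stated form (with $\E[\Xi_n^{(i)}]$ in place of $\E[\Xi_{n,\mathsf{tr}}^{(i)}]$) via the triangle inequality, and the resulting discrepancy $\sum_i|\E[\Xi_n^{(i)}(\R^d)] - \E[\Xi_{n,\mathsf{tr}}^{(i)}(\R^d)]|$ is absorbed into the truncation error; this is where the full strength of (S*) at $\ell \ge 1$ enters, since a single truncation failure inside a cluster neighborhood can alter the cluster-size count.

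The main obstacle is the ``Palm-coupling'' term $\sum_{i\ge 1}\int_W \dtv((\Xi_{n,\mathsf{tr}}^{zi})_{A_z},(\Xi_{n,\mathsf{tr}})_{A_z})\,\E[\Xi_{n,\mathsf{tr}}^{(i)}](dz)$, which I would treat as on p.~\pageref{p:boundTVpoisson} of the Poisson proof. The stopping-set property of $\mathcal S_n$ applied with $S := B_{s_n}(W_n\setminus (C_n)_{n^{1/d}z})$ realizes both $(\Xi_{n,\mathsf{tr}})_{A_z}$ and its Palm version as measurable functionals of $\xi$ restricted to a thickened ring; stationarity then dominates the total variation distance by $\dtv((\xi^{o,\Xi_{n,\mathsf{tr}}^{(i)}})_{B_{s_n}(B_n^-)},\xi_{B_{s_n}(B_n^-)})$, with $B_n^- := B_{n^{1/d}\diam(W)}\setminus C_n$. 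Invoking Proposition \ref{prop:mix} with $\zeta = \Xi_{n,\mathsf{tr}}^{(i)}$, $B = B_1$, $p = 2$, $q = \infty$ and the $\mathsf{EDD}$ property on sets separated by $c_n - 2s_n - 2 = \Omega(\log n)$ gives exponential decay; summing over $i \ge 1$ yields the $c_2(\log n)^d/n$ contribution. The extra $(\log n)^d$ factor relative to the Poisson case arises because the ``reference set'' in Proposition \ref{prop:mix} is now a logarithmic-radius ball rather than a singleton neighborhood.
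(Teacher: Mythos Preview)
Your overall architecture (truncate, apply Proposition \ref{pr:cp_app}, then bound the Palm-coupling term via Proposition \ref{prop:mix} and the $\mathsf{EDD}$ property) matches the paper's. However, the choice $A^c=n^{-1/d}C_n$ is a genuine gap, and your separation claim $c_n-2s_n-2=\Omega(\log n)$ is false.

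Condition \eqref{eq:c} is only an \emph{upper} bound $c_n\le (c_{\ms c}+o(1))\log n$; nothing prevents $c_n=o(\log n)$ or $c_{\ms c}<4c_{\ms{es}}^{-1}$, so $c_n-2s_n$ may well be negative. Worse, once you track the radii correctly the sets in Proposition \ref{prop:mix} actually overlap regardless of the size of $c_n$: the indicator $h_n^{(i)}(\xx,\cdot)$ determining membership in $\Xi_{n,\mathsf{tr}}^{(i)}$ depends on the cluster count in $(C_n)_{z(\xx)}$, hence on $\xi$ in $D_\xx=B_{s_n+c_n}(z(\xx))$, while the outer process $(\Xi_{n,\mathsf{tr}})_{A_z}$ is measurable with respect to $\xi$ on $B_{s_n}(W_n\setminus (C_n)_{n^{1/d}z})$. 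After adding $B=B_1$, the relevant distance is at most $(c_n-s_n-1)-(s_n+c_n+1)=-2s_n-2<0$, so $\beta^{\xi}_{B+D,B+A}$ is not small and the EDD bound says nothing.

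The paper fixes this by decoupling the cluster set $C_n$ from the separation set: Proposition \ref{pr:cp_app} is applied with $A_z=B_{a_n}(z)^c$ for
\[
a_n:=n^{-1/d}\big[\theta_2^{-1}(4+2\theta_0/d)+4c_{\ms{es}}^{-1}+2c_{\ms c}\big]\log n,
\]
which is strictly larger than both $n^{-1/d}c_n$ and $2n^{-1/d}s_n$. At the original scale this guarantees $d(B+D,B+A)\ge b_n-2c_n-2s_n-2$ with $b_n=n^{1/d}a_n$, and the constant in front of $\log n$ is engineered so that the EDD bound yields $O(n^{-2-\theta_0/d})$. The width and two-point terms are still $O(n^{-1/d}\log n)$ and $O(n\gamma_n^2(\log n)^d)$ because $a_n$ remains of logarithmic order. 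A second difference worth noting: the paper's truncation keeps the \emph{untruncated} cluster count $\Xi_n((C_n)_{z(\xx)})$ but additionally requires $R_n(w,\xi)\le s_n$ for every $w\in\Xi_n\cap(C_n)_{z(\xx)}$; this makes the measurability of $h_n^{(i)}$ with respect to $\xi_{D_\xx}$ immediate via the stopping-set property, and the conversion back to $\E[\Xi_n^{(i)}]$ goes through \cite[Theorem 4.6]{BM02} rather than a direct triangle-inequality argument on the $\hat d_1$ terms.
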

Theorem \ref{Th:CP} is more general than Theorem \ref{Th:CP} since it can be used to derive Poisson approximations. However, from a practical point of view, we apply Theorem \ref{Th:Poi} rather than Theorem \ref{Th:CP} to establish a Poisson approximation result since the latter requires less restrictive assumptions.

\begin{proof}[Proof of Theorem \ref{Th:CP}] 	
For $i \ge 1$, we define the truncated processes
\begin{equation} \label{eqn:xielltr}
	\Xi_{n,\ms{tr}}^{(i)}:=\sum_{\xx \in \xi_{\neq}^k} \one\{z(\xx)\in W_n\} \one\{\Xi_n((C_n)_{z(\xx)})=i\} g_n(\xx,\xi) \one\{\forall w \in \Xi_n \cap (C_n)_{z(\xx)}:R_n(w,\xi)\le s_n \} \delta_{n^{-1/d}z(\xx)},
\end{equation}
and set $i\pi_{i,\ms{tr}}:=\E[\Xi_{n,\ms{tr}}^{(i)}]$, with $s_n:=2c_{\ms{es}}^{-1}\log n$. We also let  $\Xi_{n,\ms{tr}}:=\sum_{i \ge 1} \Xi_{n,\ms{tr}}^{(i)}$. 

By the triangle inequality for the $\mathbf{d_2}$ distance, we have
\begin{multline*}
	\mathbf{d_2}(\mathcal L \Xi_n, \mathsf{CP}(\pi_1,\pi_2,\dots))\\
	\,\le \mathbf{d_2}(\mathcal L \Xi_n, \mathcal L \Xi_{n,\sf{tr}}) + 	\mathbf{d_2}(\mathcal L \Xi_{n,\mathsf tr}, \mathsf{CP}(\pi_{1,\ms{tr}},\pi_{2,\ms{tr}},\dots)) + \mathbf{d_2}( \mathsf{CP}(\pi_{1,\ms{tr}},\pi_{2,\ms{tr}},\dots), \mathsf{CP}(\pi_1,\pi_2,\dots)).
\end{multline*}	
Since  $\Xi_{n,\sf{tr}} \subset \Xi_n $ a.s., we find from the definition of the $\mathbf {d_2}$ distance  that
\begin{align*}
\mathbf{d_2}(\mathcal L \Xi_n, \mathcal L \Xi_{n,\sf{tr}}) & \le \E[\Xi_n(\R^d)] - \E[\Xi_{n,\sf{tr}}(\R^d)]\\
& =\int_{\R^d} \one\{z(\xx) \in W_n\}  \E[g_n(\xx,\xi^{\xx}) \one\{\exists w \in \Xi_n \cap (C_n)_{z(\xx)}:R_n(w,\xi)> s_n \} ] \rho_k(\xx)\d \xx\\
& \leq \sum_{\ell=0}^k \int_{(\R^d)^k \times (\R^d)^\ell} \one\{z(\xx)\in W_n\}  \one\{z(\xx_{k-\ell},\yy) \in W_n \cap (C_n)_{z(\xx)}\}\\
& \hspace{2.75cm} \times \E[g_n(\xx,\xi^{\xx,\yy}) g_n((\xx_{k-\ell}, \yy),\xi^{\xx,\yy}) \one\{R_n(\xx,\xi^{\xx,\yy})>s_n\}] \rho_{k +\ell}(\xx,\yy) \d(\xx,\yy)
\end{align*}
and therefore, from condition \eqref{eq:s*}, that $\mathbf{d_2}(\mathcal L \Xi_n, \mathcal L \Xi_{n,\sf{tr}})\leq \frac{\alpha_1}{n}$. Moreover, we find from \cite[Theorem 4.6]{BM02} that
 \begin{multline*}
 	\mathbf{d_2}( \mathsf{CP}(\pi_{1,\ms{tr}},\pi_{2,\ms{tr}},\dots), \mathsf{CP}(\pi_1,\pi_2,\dots))\\
 	\begin{split}
 	 & \le e^{n \gamma_n |W|} \Big(\sum_{i \ge 1}\Big\{ i |\pi_i(\R^d) - \pi_{i,\ms{tr}}(\R^d)| + \min(\pi_i,\pi_{i,\ms{tr}}) \hat d_1(\pi_i, \pi_{i,\ms{tr}}) \Big\}\Big)\\
 	& = e^{n \g_n |W|}\sum_{i \ge 1} \Big\{ |i\pi_{i}(\R^d)- \E[\Xi_n^{(i)}(\R^d)] | + \min\{\pi_{i}(\R^d), \E[\Xi_n^{(i)}(\R^d)]/i\} \hat d_1(\pi_{i},\E[\Xi_n^{(i)}]/i) \Big\}.
 	\end{split}
 \end{multline*}

In what follows, we provide an upper bound for $\mathbf{d_2}(\mathcal L \Xi_{n,\ms{tr}}, \mathsf{CP}(\pi_{1,\ms{tr}},\pi_{2,\ms{tr}},\dots))$. To do it, we apply Proposition \ref{pr:cp_app} with $A_z:=B_{a_n}(z)^c$, where $a_n:=n^{-1/d}[\theta_2^{-1}(4+2\theta_0/d) +4c_{\ms{es}}^{-1}+2c_{\ms{c}}]\log n$.  This gives
\begin{multline}
	\mathbf{d_2}(\mathcal L \Xi_{n,\ms{tr}}, \mathsf{CP}(\pi_{1,\ms{tr}},\pi_{2,\ms{tr}},\dots))\le e^{n \g_n |W|} \Big(\int_W \big(a_n+2\E[\Xi_{n,\ms{tr}}(B_{a_n}(z))] \big)\, \E[\Xi_{n,\ms{tr}}](\d z)\\
+2\sum_{i \ge 1} \int_W \dtv((\Xi_{n,\ms{tr}}^{zi})_{B_{a_n}(z)^c},(\Xi_{n,\ms{tr}})_{B_{a_n}(z)^c}) \,\E[\Xi_{n,\ms{tr}}^{(i)}](\d z)\Big).\label{eq:d2boucom}
\end{multline}
 Here, we have 
\begin{multline*}
\int_W \E[\Xi_{n,\ms{tr}}(B_{a_n}(z))] \E[\Xi_{n,\ms{tr}}](\d z)\\
\begin{split}
& =\int_{(\R^d)^k \times (\R^d)^k} \one\{z(\xx)\in W_n,\,z(\yy)\in B_{n^{1/d}a_n}(z(\xx))\cap W_n\} \E[g_n(\xx,\xi^{\xx})] \E[g_n(\yy,\xi^{\yy})] \rho(\xx) \rho(\yy) \d(\xx,\yy)\\
& \le c n \g_n^2 (\log n)^d,
\end{split}
\end{multline*}
for some constant $c>0$. 

For $i \ge 1$ and $z \in W$ let $\xi^{zi}$ be a Palm version of $\xi$ at $z$ with respect to $\Xi_{n,\ms{tr}}^{(i)}$. Then we have $\Xi_{n,\ms{tr}}^{zi} \stackrel{d}{=}\Xi_{n,\ms{tr}}[\xi^{zi}]$. Proceeding in the same spirit as in the proof of Theorem \ref{Th:Poi}, we can prove that with $B_n^-:=B_{n^{1/d}w}\setminus B_{b_n}$,
\[\dtv((\Xi_{n,\ms{tr}}^{zi})_{B_{a_n}(z)^c},(\Xi_{n,\ms{tr}})_{B_{a_n}(z)^c})  \leq \dtv((\xi^{oi})_{B_{s_n+c_n}(B_n^-)}, \xi_{B_{s_n+c_n}(B_n^-)}).\]
Let $D_{\xx}=B_{s_n+c_n}(z(\xx))$ and \[h_n^{(i)}(\xx,\om)=\one\{z(\xx) \in W_n\}\one\{\Xi_n(C_{z(\xx)})=i\} \one\{\forall w \in \Xi_n \cap (C_n)_{z(\xx)}:R_n(w,\xi)\le s_n \} g_n(\xx,\xi).\] By the stopping set property of $\mathcal S_n$ and the fact that $R_n$ is a stabilization radius, the function $h_n^{(i)}(\xx,\xi)$ is measurable with respect to $\xi_{D_\xx}$. Therefore
$$\Xi_{n,\mathsf{tr}}^{(i)} =\sum_{\xx \in  \xi_{\neq}^k} h_n^{(i)}(\xx,\xi_{D_{\xx}}) \delta_{n^{-1/d}z(\xx)}.
$$
Similarly to p. \pageref{p:boundTVpoisson}, we obtain from the $\mathsf{EDD}$ property of $\xi$ that
\begin{multline*}
\dtv((\xi^{oi})_{B_{s_n+c_n}(B_n^-)}, \xi_{B_{s_n+c_n}(B_n^-)})\\
\le \frac{4(\E[\Xi_{n,\ms{tr}}^{(i)}(B)^2])^{1/2}}{\E[\Xi_{n,\ms{tr}}^{(i)}(B)]}\left( \theta_1 [(wn^{1/d}+s_n+c_n+1)(s_n+c_n+1) ]^{\theta_0} e^{-\theta_2 (b_n-2c_n-2s_n-2)}\right)^{1/2}.
\end{multline*}
Thanks to Condition \eqref{eq:c}, this gives
\begin{multline*}
	\sum_{i \ge 1} \int_W \dtv((\Xi_{n,\ms{tr}}^{zi})_{B_{a_n}(z)^c},(\Xi_{n,\ms{tr}})_{B_{a_n}(z)^c}) \, \E[\Xi_{n,\ms{tr}}^{(i)}](\d z)\\
\begin{split}
	&\le\sum_{i \ge 1}  \frac{\E[\Xi_{n,\ms{tr}}^{(i)}(W)]}{\E[\Xi_{n,\ms{tr}}^{(i)}(B)]}\frac{4 e^{\theta_2+c_n/2 }\left(\theta_1 [(wn^{1/d}+s_n+c_n+1)(s_n+c_n+1) ]^{\theta_0}\right)^{1/2}}{n^{2+\theta_0/d}}  (\E[\Xi_{n,\ms{tr}}^{(i)}(B)^2])^{1/2}\\
	& \le \frac{\alpha_2}{n}
	\end{split}
\end{multline*}
for some positive constant $\alpha_2$ and for $n$ large enough, where we have used that $\frac{\E[\Xi_{n,\ms{tr}}^{(i)}(W)]}{\E[\Xi_{n,\ms{tr}}^{(i)}(B)]}=\frac{|W|}{|B \cap W|}$ due to the stationarity of $\xi$ and translation-invariance of $g_n$. This concludes the proof of Theorem \ref{Th:CP}.
\end{proof}

\section{Applications}
\label{sec:applications}

\subsection{Maximum nearest-neighbor distances in the Gauss-Poisson process}
In this section, we give an application of Theorem \ref{Th:Poi} in a situation where the input process $\xi$ is not Poisson. The latter is the Gauss-Poisson process and is defined as follows (see e.g. \cite{CSKM13}, Example 5.6). Let $\Phi$ be a point process defined in the following way: $\Phi$ has an isotropic distribution and is composed of zero, one or two points with probability $p_0\neq 0$, $p_1\neq 0$ and $p_2=1-(p_0+p_1)$. If $\Phi$ contains only one point then that point is the origin 0. If $\Phi$ is composed of two points then these are separated by a unit distance and have midpoint $o$. Given a stationary Poisson point process $\eta$ of intensity 1 in $\R^2$, let $\Phi(x)$, $x\in \eta$, be a family of i.i.d. point processes with the same distribution as $\Phi$. The \textit{Gauss-Poisson} process is defined as
\[\xi = \bigcup_{x\in \eta}(x+\Phi(x)).\]
Notice that when $p_2=0$, a Gauss-Poisson process is a stationary Poisson point process with intensity $p_1$. The intensity of $\xi$ is $\gamma=p_1+2p_2$. The Gauss-Poisson process was introduced by Newman \cite{N70} and investigated by Milne and Westcott \cite{MW72}. It has a potential application in statistical mechanics (see \cite{N70}, p. 350) and could be used as a model for molecular motion (see \cite{MW72} p. 169).  It  is also a Boolean model which satisfies the $\mathsf{EDD}$ property (see \cite[Lemma 3.4]{CX22}). The fact that the Palm distribution of a Gauss-Poisson process is explicitly available makes it very tractable for the application of Theorem \ref{Th:Poi}.

In all this section, we let $W_n:=[-n^{1/2}/2,n^{1/2}/2]^2$ and 
\begin{equation}
\Xi_n:=	\Xi_n[\xi]=\sum_{x \in \xi} \one\{x\in W_n\} \one\{\xi(B_{v_n}(x)\setminus \{x\})=0\} \ \delta_{n^{-1/2}x}.	
\label{eq:defxiGP}
\end{equation}

Here, the threshold $v_n$ is chosen such that $v_n \to \infty$ as $n \to \infty$ and
\begin{equation}
\lim_{n \to \infty}\mathbb{E} [\Xi_n(W_n)]= \lim_{n \to \infty}n \mathbb{P}(\xi^{o!}(B_{v_n})=0)=\tau	\label{nnintass}
\end{equation}
 for some fixed constant $\tau >0$.

 \subsubsection{The threshold $v_n$}
We provide below an explicit value of $v_n$. According to (5.53) on p. 175 in \cite{CSKM13}, we know that
\[\mathbb{P}(\xi^{o!}(B_{v})=0) = \frac{1}{p_1+2p_2}e^{-(p_1\pi v^2 + p_2(2\pi v^2 - a(v)))}\times \left\{\begin{split}   & p_1+2p_2, \quad &0\leq v<1\\
& p_1, & v\geq 1;
\end{split} \right.\]
and 
\[
a(v)=2v^2\arccos\frac{1}{2v} - \frac{1}{2}\sqrt{4v^2-1} \text{ for } v\geq 1
\]
and equals zero otherwise. Following the same computations as in \cite{C14}, p. 2949, it can be proved that the threshold\begin{equation}\label{eq:defvnGP}
v_n =  \frac{4 p_2 + \left(8p_2^2 + 8\pi (p_1+p_2) \left(\log\left(\frac{p_1}{p_1+2p_2} \right)  + \log n - \log \tau \right)\right)^{1/2}}{4\pi (p_1+p_2)}
\end{equation} 
satisfies the property \begin{equation}  \label{eq:expvnGP}\mathbb{E} [\Xi_n(W_n)] = \tau + O((\log n)^{-1/2}).\end{equation}

\subsubsection{Poisson approximation}
The following theorem claims that the point process $\Xi_n$ is asymptotically Poisson as $n$ goes to infinity. 

\begin{thm}
\label{th:d2GP}
Let $\tau>0$ and $v_n$ be as in \eqref{eq:defvnGP}. Consider the process $\Xi_n$ defined at \eqref{eq:defxiGP}.  Then 
\[
		\mathbf{d_2}(\mathcal L \Xi_n,\mathsf{Po}(\tau \mathsf{Leb}_{[0,1]^2}))  =  O((\log n)^{-1/2}).
\]
\end{thm}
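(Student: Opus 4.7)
The plan is to apply Theorem \ref{Th:Poi} with $k=1$, $z(x)=x$, the Boolean indicator $g_n(x,\omega)=\one\{\omega(B_{v_n}(x)\setminus\{x\})=0\}$, window $W=[-1/2,1/2]^2$, and target intensity $\pi=\tau\,\mathsf{Leb}_W$. The Gauss-Poisson process is a Boolean model and hence satisfies the $\mathsf{EDD}$ property by \cite[Lemma 3.4]{CX22}. Assumption (\textbf{M}) reduces to $\gamma_n=\P(\xi^{o!}(B_{v_n})=0)>0$, which is guaranteed because $p_0,p_1>0$. The stabilization radius can be taken as the deterministic quantity $R_n\equiv v_n$, and since $v_n\sim c\sqrt{\log n}$ the event $\{R_n>r_n\}$ is empty for any $r_n\ge(\log n)^{1/2+\varepsilon}$ and $n$ large; thus (\textbf{S}) holds with $c_{\ms{es}}=+\infty$.

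With these choices the upper bound of Theorem \ref{Th:Poi} splits into four contributions. First, $\hat d_1(n\gamma_n\mathsf{Leb}_W,\tau\,\mathsf{Leb}_W)=|W|\,|n\gamma_n-\tau|$ is exactly $O((\log n)^{-1/2})$ by \eqref{eq:expvnGP}. The prefactor $e^{n\gamma_n|W|}$ is bounded since $n\gamma_n\to\tau$. The residual terms $c_1\gamma_n(\log n)^2$ and $c_2/n$ are both $o((\log n)^{-1/2})$ because $\gamma_n=O(1/n)$. It remains to bound the pair integral $I$ coming from the sum over $\ell=1$, which by the second-order Campbell formula equals, up to a constant factor,
\[
\E\Big[\sum_{(x,y)\in\xi^2_{\neq}}\one\{x\in W_n,\,|x-y|\le b_n\}\,g_n(x,\xi)\,g_n(y,\xi)\Big].
\]

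To estimate $I$, first observe that if $|x-y|\le v_n$ then $y\in B_{v_n}(x)\setminus\{x\}$ forces $g_n(x,\xi)=0$, so only pairs with $v_n<|x-y|\le b_n$ contribute. For such a pair, $g_n(x)g_n(y)=1$ further requires that neither $x$ nor $y$ has a companion (which would sit at distance $1<v_n$ inside the forbidden balls), hence both must be singleton points of their $\eta$-clusters, and that no other $\xi$-point falls in $B_{v_n}(x)\cup B_{v_n}(y)$. Using the explicit void-probability formula for the Gauss-Poisson analogous to \cite[(5.53)]{CSKM13}, this last probability is at most $\exp(-(p_1+p_2)|B_{v_n}(x)\cup B_{v_n}(y)|+O(v_n))$; since $|x-y|>v_n$, the union of the two disks has area bounded below by a strict multiple of $\pi v_n^2$, so the calibration \eqref{nnintass} of $v_n$ gives a bound $O(n^{-(1+\delta)})$ for some $\delta>0$. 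Combined with the fact that the two-point correlation function is bounded in the regime $|x-y|>v_n>1$ (the singular companion part being concentrated on the sphere $|x-y|=1$, which was already excluded), integration over $x\in W_n$ and $y\in B_{b_n}(x)$ yields $I=O(n\cdot b_n^2\cdot n^{-(1+\delta)})=O((\log n)^2 n^{-\delta})=o((\log n)^{-1/2})$.

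The main obstacle lies in this last step: one must make rigorous the decomposition of the two-point Palm version of the Gauss-Poisson into its singleton and companion cases, and control the void probability of $\xi$ in the union of two large overlapping disks, including the sub-leading boundary correction of order $v_n$ produced by the displacement of companion points. Once this is handled, the dominant contribution to the rate is precisely the intensity-matching error $|n\gamma_n-\tau|=O((\log n)^{-1/2})$ provided by \eqref{eq:expvnGP}, which delivers the claimed bound.
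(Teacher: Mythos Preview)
Your proposal is correct and follows essentially the same strategy as the paper's proof: apply Theorem~\ref{Th:Poi} with $k=1$, $z(x)=x$, deterministic stabilization radius $R_n=v_n$, identify $|n\gamma_n-\tau|=O((\log n)^{-1/2})$ as the dominant contribution via \eqref{eq:expvnGP}, and show that the pair integral is of strictly lower order. The paper resolves the step you flag as the ``main obstacle'' by writing the pair sum as an expectation over $\xi_{\neq}^2$, using the explicit Palm decomposition $\xi^o\stackrel{d}{=}\xi\cup\Phi^o$ for the Gauss--Poisson process, and then citing \cite[Equation~(5.9)]{C14} for the ready-made bound
\[
\PPP{((\xi\cup\Phi^o)+y)\setminus\{y\}\,(B_{v_n}\cup B_{v_n}(y))=0}\le c\,\PPP{\xi^{o!}(B_{v_n})=0}\,n^{-\alpha},\qquad \alpha=\frac{p_1}{2(p_1+p_2)},
\]
on the event $\{\|y\|>v_n\}$; your direct void-probability computation (exploiting that both $x$ and $y$ must be singletons and that $|B_{v_n}(x)\cup B_{v_n}(y)|\ge(1+\delta')\pi v_n^2$ when $\|x-y\|>v_n$) is the same idea carried out from scratch and yields the same polynomial saving. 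One cosmetic point: writing $c_{\ms{es}}=+\infty$ is harmless in spirit but makes $s_n=2c_{\ms{es}}^{-1}\log n$ degenerate in the proof of Theorem~\ref{Th:Poi}; it is cleaner to note that \eqref{eq:s} holds for every finite $c_{\ms{es}}>0$ and pick one.
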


As an application of the above theorem, we deal with the largest distances to the nearest neighbor. To do it, for any $x\in \R^d$, let us denote by $\mathsf{nn}(x,\xi):=\arg\min_{y\in \xi\setminus \{x\}} ||x-y||$ the nearest neighbor of $x$ in $\xi\setminus \{x\}$. Let $M_{\xi,n}^{(k)}$ be the $k$-th largest values of $\mathsf{nn}(x,\xi)$ over all $x \in \xi \cap W_n$. 

\begin{cor}
\label{cor:maxdistanceGP}
With the same notation as in Theorem \ref{th:d2GP}, we have
	$$
\lim_{n\rightarrow\infty}\mathbb{P}\big(M_{n,\xi}^{(k)}\leq v_n\big) = \sum_{i=0}^{k-1}\frac{e^{-\tau}{\tau^i}}{i!}.
	$$
\end{cor}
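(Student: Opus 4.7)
The plan is to translate the statement about the order statistic $M_{n,\xi}^{(k)}$ into a statement about the total mass of the point process $\Xi_n$, and then to transport the $\mathbf{d_2}$-rate of Theorem~\ref{th:d2GP} through a carefully chosen $1$-Lipschitz test functional.

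The first step is the identification of the event. Since $\mathsf{nn}(x,\xi)>v_n$ is equivalent to $\xi(B_{v_n}(x)\setminus\{x\})=0$, the definition \eqref{eq:defxiGP} shows that $\Xi_n(\R^d)$ counts exactly the points of $\xi\cap W_n$ whose nearest-neighbour distance exceeds $v_n$. Consequently, on the event $\{\xi(W_n)\ge k\}$ one has
\[
\{M_{n,\xi}^{(k)}\le v_n\}=\{\Xi_n(\R^d)\le k-1\}.
\]
The complementary event $\{\xi(W_n)<k\}$ has probability $O(n^{-1})$ since $\E[\xi(W_n)]=\gamma n\to\infty$ and $\xi$ has a finite second correlation function, so it may be dropped without affecting the limit. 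Hence it suffices to prove that $\PP(\Xi_n(\R^d)\le k-1)$ converges to $\sum_{i=0}^{k-1}e^{-\tau}\tau^i/i!$.

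Next I would verify that $f:\widehat{\mathbf N}\to[0,1]$ defined by $f(\omega):=\one\{\omega(\R^d)\le k-1\}$ belongs to the class $\mathcal F$ of $1$-Lipschitz functions with respect to $d_1$. Indeed, if $\mu(\R^d)=\chi(\R^d)$ then $f(\mu)=f(\chi)$; otherwise the definition of $d_1$ forces $d_1(\mu,\chi)=1\ge|f(\mu)-f(\chi)|$. With $\eta^{\star}\sim\mathsf{Po}(\tau\,\mathsf{Leb}_{[0,1]^2})$, the definition of $\mathbf{d_2}$ then yields
\[
\bigl|\PP(\Xi_n(\R^d)\le k-1)-\PP(\eta^{\star}(\R^d)\le k-1)\bigr|\le \mathbf{d_2}\bigl(\mathcal L\Xi_n,\mathsf{Po}(\tau\,\mathsf{Leb}_{[0,1]^2})\bigr),
\]
which is $O((\log n)^{-1/2})$ by Theorem~\ref{th:d2GP}. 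Since $\eta^{\star}(\R^d)$ has the Poisson distribution with parameter $\tau$, its tail at level $k-1$ is precisely the asserted partial sum.

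No genuine obstacle arises, as the entire analytical content is already embedded in Theorem~\ref{th:d2GP}; the present argument merely extracts a single-functional consequence. The only point that deserves a moment of care is the Lipschitz property of the tail indicator in the slightly non-standard metric $d_1$, whose mass-discrepancy convention is exactly what makes the indicator fit into the class $\mathcal F$.
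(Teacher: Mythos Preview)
Your proof is correct and follows the same route as the paper, which simply states that the result ``is a direct consequence of Theorem~\ref{th:d2GP} and of the fact that $M_{n,\xi}^{(k)}\le v_n$ if and only if $\Xi_n(W_1)\le k-1$.'' You supply the details the paper omits: the handling of the degenerate event $\{\xi(W_n)<k\}$ and, more importantly, the verification that the tail indicator $\omega\mapsto\one\{\omega(\R^d)\le k-1\}$ is $1$-Lipschitz for $d_1$, which is precisely what allows the $\mathbf{d_2}$-bound to be converted into a bound on the relevant probabilities.
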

A similar result, only stated for the maximum, has been established in \cite[Proposition 8]{C14}. Notice that Poisson approximation for the point process of exceedances $\Xi_n$ could be also derived from \cite[Theorem 2]{C14}. However, Theorem \ref{th:d2GP} is more precise since it provides a rate of convergence.

\begin{proof}[Proof of Theorem \ref{th:d2GP}]
This will be sketched since it is mainly based on the proof of \cite[Proposition 8]{C14}. We apply Theorem \ref{Th:Poi} to $g_n(x,\omega)= \one\{\omega \cap (B_{v_n}(x)\setminus\{x\})=\es\}$, with $n\geq 1$.  The stabilization radius which we consider is defined as $R(x,\omega):=v_n$. It is straightforward that the conditions  \eqref{eq:m} and \eqref{eq:s} hold. According to Theorem \ref{Th:Poi}, we obtain 
\begin{multline}
\mathbf{d_2}(\mathcal L \Xi_n,\mathsf{Po}(\tau \lambda_{[0,1]^2})) 
\le {\frac{c_1}{n} + |n \gamma_n -\tau|+ e^{n \g_n} \Big( c_2 \gamma_n(\log n)^2 +\frac{c_3}{n}}\\
\quad + \gamma\int_{(\R^2)^2} \one\{x \in W_n,\,y \in B_{b_n}(x)\}  \P(\xi^{x!,y!}(B_{v_n}(x) \cup B_{v_n}(y))=0)  \,\rho_{2}(x,y) \, \mathrm{d}(x,y) \Big),\label{eq:d2bougin}
 \end{multline}
 where  $\xi^{x!,y!}:=\xi^{x,y}\setminus \{x,y\}$ and $\gamma_n = \PPP{\xi^{o!}(B_{v_n}) = 0}$. The integral appearing in the above expression can be expressed as 
 \begin{multline*}
 \int_{(\R^2)^2} \one\{x \in W_n,\,y \in B_{b_n}(x)\}  \P(\xi^{x!,y!}(B_{v_n}(x) \cup B_{v_n}(y))=0)  \,\rho_{2}(x,y) \, \mathrm{d}(x,y) \\
 \begin{split}
 & = \EEE{\sum_{(x,y)\in \xi_{\neq}^2}\one\{x\in W_n\}\one\{y\in B_{b_n}(x)\}\one\{(\xi\setminus\{x,y\})(B_{v_n}(x)\cup B_{v_n}(y))=0\}   }\\
 & = n\EEE{\sum_{y\in \xi^{o!}} \one\{y\in B_{b_n}(x)\}\one\{(\xi^{o!}\setminus\{y\})(B_{v_n}\cup B_{v_n}(y))=0\}\one\{||y||>v_n\}   },
 \end{split}
 \end{multline*}
where the last line comes from the fact that $\xi$ is stationary. Now, according to Equation (5.43) in \cite{CSKM13}, we know that $\xi^o\overset{d}{=}\xi\cup \Phi^o$, where $\xi$ and $\Phi^o$ are independent and where $\Phi^o$ is a Palm version of $\Phi$. Since $\xi(\{o\})=0$ and $\Phi^o(B_{v_n}^c)=0$ for $n$ large enough, this implies
\begin{multline*}
n\EEE{\sum_{y\in \xi^{o!}} \one\{y\in B_{b_n}(x)\}\one\{(\xi^{o!}\setminus\{y\})(B_{v_n}\cup B_{v_n}(y))=0\}\one\{||y||>v_n\} } \\
\begin{split}
& = n\EEE{\sum_{y\in (\xi\cup\Phi^{o!})} \one\{y\in B_{b_n}(x)\}\one\{((\xi\cup \Phi^{o!})\setminus\{y\})(B_{v_n}\cup B_{v_n}(y))=0\}\one\{||y||>v_n\} } \\
& \leq  n\EEE{\sum_{y\in \xi} \one\{y\in B_{b_n}(x)\}\one\{(\xi\setminus\{y\})(B_{v_n}\cup B_{v_n}(y))=0\}\one\{||y||>v_n\} }\\
& = n\int_{B_{b_n}}\PPP{\xi^{y!}(B_{v_n} \cup B_{v_n}(y))=0}\one\{||y>v_n||\}\mathrm{d}y\\
& = n\int_{B_{b_n}}\PPP{( ((\xi\cup \Phi^o)+y)  \setminus\{y\})(B_{v_n}\cup B_{v_n}(y))=0}\one\{||y||>v_n\}\mathrm{d}y,
\end{split}
\end{multline*}
where the last line comes from the fact that $\xi^y\overset{d}{=}(\xi\cup \Phi^o)+y$. According to Equation (5.9) in \cite{C14}, we have on the event $\{||y||>v_n\}$
\begin{equation*}
\PPP{( ((\xi\cup \Phi^o)+y)  \setminus\{y\})(B_{v_n}\cup B_{v_n}(y))=0}\leq c\PPP{\xi^{o!}(B_{v_n})=0}n^{-\alpha}=O(n^{-1-\alpha}), 
\end{equation*}
where $\alpha=\frac{p_1}{2(p_1+p_2)}$. Integrating over $B_{b_n}$ and using Equations \eqref{eq:expvnGP} and \eqref{eq:d2bougin}, we deduce that $$\mathbf{d_2}(\mathcal L \Xi_n,\mathsf{Po}(\tau \lambda_{[0,1]^2}))  =  O((\log n)^{-1/2}).$$ This concludes the proof of Theorem \ref{th:d2GP}. 
\end{proof}

\begin{proof}[Proof of Corollary \ref{cor:maxdistanceGP}]
This is a direct consequence of Theorem \ref{th:d2GP} and of the fact that $M_{n,\xi}^{(k)}\leq v_n$ if and only if $\Xi_n(W_1)\leq k-1$. 
\end{proof}

\subsection{Smallest angles in the Poisson-Delaunay tessellation}
In this section, we give an application of Theorem \ref{Th:CP} in the context of random Delaunay tessellations. Let $\omega$ be a $\sigma$-finite counting measure in $\R^2$ in general position. The Delaunay tessellation associated with $\omega$ is the unique triangulation with vertices in $\omega$ such that the circumdisk of each simplex contains no point of $\omega$ in its interior. Delaunay tessellations are a very popular structure in computational geometry~\cite{AKLK13} and are extensively used in many areas such as surface reconstruction, mesh generation, molecular modeling, and medical image segmentation, see e.g. \cite{CG06,CDS12} and \cite{OBSC00} for a wide panorama on random tessellations.   In what follows, we denote by $\del(\omega)$ the set of triangles in the Delaunay tessellation associated with $\omega$ and, for any 3-tuple of points $\mathbf{x}=(x_1,x_2,x_3) \in (\R^2)_{\neq}^3$ in general position, we denote by $\Delta(\mathbf{x})$ the triangle spanned by $\mathbf{x}$ and by $z(\mathbf{x})$ its circumcenter. We also let $\alpha_{\min}(\Delta(\mathbf{x}))$ the minimum of angles of $\Delta(\mathbf{x})$. 

In all this section, the Delaunay tessellation is based on a stationary Poisson point process $\eta$ (or on a Palm version of $\eta$) of intensity $1$ in $\RR^2$. In particular, $\eta$ satisfies the $\mathsf{EDD}$ property. Now, let $\tau>0$ be fixed and $W_n=[-n^{1/d}/2, n^{1/d}/2]$. We consider the point process
\begin{equation}
\label{eq:defxinangles}
\Xi_n:=\Xi_n[\eta]=\sum_{\mathbf{x} \in \eta_{\neq}^3}  \one\{z(\mathbf{x}) \in W_n\}  g_n(\xx,\eta)  \delta_{n^{-1/2}z(\mathbf{x})},
\end{equation}
where, for any $\omega\in \mathbf{N}$ in general position and for any  $\mathbf{x}=(x_1,x_2,x_3) \in \omega_{\neq}^3$, we write
\[g_n(\mathbf{x}, \omega) = \one\{\alpha_{\min}(\Delta(\mathbf{x}))<v_n\} \one\{\Delta(\mathbf{x}) \in \textbf{Del}(\omega)\} .\]
The threshold $v_n$ appearing in the above expression will be chosen in such a way that $\E[\Xi_n] \conv[n]{\infty} \tau$. 

\subsubsection{The threshold $v_n$}
We give below an explicit value of $v_n$. To do it, we first express $\E[\Xi_n]$ in terms of typical Delaunay triangle. In what follows, we denote the latter by $\Delta_0$. Recall that $\Delta_0$ is a random triangle whose the distribution can be defined as follows (see e.g. Section 10.4 in \cite{SW08}): for any $B\in \mathcal{B}^2$ with $|B|\in (0,\infty)$, and for any translation invariant function $h: \mathcal{K}_2\rightarrow \R_+$, where $\mathcal{K}_2$ denotes the set of convex bodies in $\R^2$,
\[\EEE{h(\Delta_0)} = \frac{1}{2 |B|} \EEE{\sum_{\mathbf{x} \in \eta_{\neq}^3  } \one\{z(\mathbf{x}) \in B\}  h(\Delta(\mathbf{x})) \one\{\Delta(\mathbf{x}) \in \mathbf{Del}(\mathbf{x})\}  }.\]
Since $|W_n|=n$, we have
\[\EEE{\Xi_n} = 2n\PPP{\alpha_{\min}(\Delta_0)<v_n}.\]
According to \cite{Mardia77}, it is known that  $\alpha_{\min}(\Delta_0)$ has a Lebesgue density given by
$$
f(t)=\frac{4}{\pi}  \sin(t)  \left((\pi-3 t)\cos (t)+\sin(3 t)\right) \I\{0 \le t \le \frac{\pi}{3}\}.
$$
This implies $\PPP{\alpha_{\min}(\Delta_0)<v} = 2v^2 -\tfrac{1}{2}v^4+o(v^4)$ and therefore $\EEE{\Xi_n} = 4nv_n^2 - nv_n^4 + o(nv_n^4)$. In particular, to ensure that $\E[\Xi_n] \conv[n]{\infty} \tau$, it is sufficient to take 
\begin{equation}
\label{eq:defvnangles}
v_n=\frac{1}{2}\tau^{1/2}n^{-1/2}.
\end{equation}
In particular, with this choice, we have \begin{equation} \label{eq:secondordertypical} 2n\PPP{\alpha_{\min}(\Delta_0)<v_n} = \tau + O(n^{-1}).\end{equation}

\subsubsection{Compound Poisson approximation}

\begin{thm}
\label{th:d2angles}
Let $\eta$ be a Poisson point process of intensity 1 in $\R^2$. Let $\tau>0$ be fixed, $\Xi_n$ as in \eqref{eq:defxinangles} and $v_n$ as in \eqref{eq:defvnangles}. Moreover, let $\mathsf{CP}(\pi_1,\pi_2,\dots)$ be the distribution of a compound Poisson process with parameters $\pi_1$, $\pi_2$, \ldots, with $\pi_1(B) = \frac{1}{2}\tau |B\cap W_1|$, $\pi_2(B) = \frac{1}{4}\tau |B\cap W_1|$ and $\pi_i(B)=0$ for any $i\geq 3$ and $B\in \mathcal B^2$.  Then, 
\[
	\mathbf{d_2}(\mathcal L \Xi, \mathsf{CP}(\pi_1,\pi_2,\dots)) =  O((\log n)^2/n) \quad \text{as } n \to \infty.
\]
\end{thm}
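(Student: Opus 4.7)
The plan is to apply Theorem \ref{Th:CP} with $\xi=\eta$, $k=3$, $g_n$ and $z$ as in the definition of $\Xi_n$, and clustering region $C_n:=B_{c_0 \log n}(o)$ for a sufficiently large constant $c_0>0$. The Poisson process $\eta$ satisfies the $\mathsf{EDD}$ property, condition (M) is a consequence of \eqref{eq:secondordertypical} with $n\gamma_n = \tau + O(1/n)$, and condition (C) holds with $c_{\ms{c}}=c_0$. For the stabilization radius I take $R_n(z(\mathbf{x}),\omega):=2\rho(z(\mathbf{x}),\omega)$, where $\rho(z,\omega)$ denotes the radius of the largest ball around $z$ containing no point of $\omega$; this is a standard Delaunay stabilization radius. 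Under the Poisson law $\rho$ has Gaussian-type tails, and combined with $\P(\alpha_{\min}(\Delta(\mathbf{x}))<v_n)=O(v_n^2)=O(1/n)$ this yields condition $(\mathbf{S}^*)$.

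With these choices, Theorem \ref{Th:CP} bounds $\mathbf{d_2}(\mathcal L \Xi_n,\mathsf{CP}(\pi_1,\pi_2,\dots))$ by contributions of order $n\gamma_n^2(\log n)^2$ and $(\log n)^2/n$, both $O((\log n)^2/n)$ since $n\gamma_n=O(1)$, plus a sum over $i\ge 1$ involving $|i\pi_i(\R^2)-\E[\Xi_n^{(i)}(\R^2)]|$ and $\hat d_1(\pi_i,\E[\Xi_n^{(i)}]/i)$. It therefore remains to identify the cluster-size intensities and control the boundary-type discrepancies; concretely, to show that $\E[\Xi_n^{(1)}(\R^2)]$ and $\E[\Xi_n^{(2)}(\R^2)]$ match $\pi_1(\R^2)=\tau/2$ and $2\pi_2(\R^2)=\tau/2$ with the required error, that $\sum_{i\ge 3}\E[\Xi_n^{(i)}(\R^2)]$ is negligible, and that the corresponding $\hat d_1$-terms vanish.

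The decisive geometric observation is that by the law of sines, a Delaunay triangle $\Delta$ with circumradius $r=\Theta(1)$ (the typical Poisson-Delaunay case) and minimum angle $\alpha<v_n$ has a \emph{unique} short edge, of length $2r\sin\alpha=O(v_n)=O(n^{-1/2})$, opposite $\alpha$, whereas the other two edges have length $\Theta(1)$. Consequently, the only Delaunay neighbour of $\Delta$ that can itself be a bad triangle is the one across this short edge, so asymptotically clusters take values in $\{1,2\}$. Clusters of size $\ge 3$ and configurations where two bad triangles are close but not adjacent are controlled by a second-moment estimate and contribute at most $O(n\gamma_n^2(\log n)^2)=O((\log n)^2/n)$. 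The boundary-layer corrections to the intensity measures of $\Xi_n^{(i)}$, localised in a strip of width $O((\log n)/n^{1/2})$ inside $W_1$, make $\E[\Xi_n^{(1)}]$ slightly larger than and $\E[\Xi_n^{(2)}]$ slightly smaller than the corresponding Lebesgue measures on $W_1$; by the trimming structure of $\hat d_1$, the Wasserstein-type contributions vanish after removing the boundary excess, leaving only the pure mass discrepancies.

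The expected number of genuinely adjacent bad pairs is computed via the multivariate Mecke formula combined with a Blaschke--Petkantschin-type change of variables that parametrises a pair of adjacent Delaunay triangles by their shared edge and the two opposite vertices. Under the resulting Palm law conditional on the shared short edge, the two opposite vertices are independent and identically distributed, so the asymptotic probability that the neighbour triangle is also bad (equivalently, the angle opposite the short edge in the neighbour is also less than $v_n$) equals $1/2$. This yields $\E[\Xi_n^{(2)}(\R^2)]\to\tau/2$ and, by subtraction from $\E[\Xi_n(\R^2)]=\tau+O(1/n)$, $\E[\Xi_n^{(1)}(\R^2)]\to\tau/2$. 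The main obstacle is making this Blaschke--Petkantschin calculation quantitative to the required order and tracking the boundary corrections carefully; this is a laborious but essentially standard computation of the same flavour as the one underlying \eqref{eq:secondordertypical}. Combining all error terms produces the bound $O((\log n)^2/n)$.
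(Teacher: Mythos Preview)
Your overall strategy---apply Theorem \ref{Th:CP} with $C_n$ a logarithmic ball, verify the structural conditions, and then reduce to computing the cluster-size probabilities $p_n(1),p_n(2),\sum_{i\ge 3}p_n(i)$ via Mecke and a Blaschke--Petkantschin change of variables---is exactly the paper's approach. A few comments:

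\emph{Stabilization radius.} The paper takes the circumradius of $\Delta(\mathbf{x})$ itself as stabilization radius; this is simpler than your choice $2\rho(z,\omega)$ and yields (\textbf{S*}) directly from $\P(\Delta(\mathbf{x})\in\del(\eta^{\mathbf{x},\mathbf{y}}))\le e^{-\pi \tilde R(\mathbf{x})^2}$. Your $2\rho$ can fail the lower-bound requirement $R_n\ge r(\mathbf{x})$ when the triangle is not Delaunay, so you would need to adjust it.

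\emph{Boundary and $\hat d_1$ terms.} By stationarity, $\E[\Xi_n^{(i)}]$ is (up to a correction that is negligible compared to $(\log n)^2/n$) a constant multiple of Lebesgue on $W_1$; the paper shows $\E[\Xi_n^{(i)}(A)]=(\tau+\varepsilon_n)|A\cap W_1|\,p_n(i)$. Hence $\hat d_1(\pi_i,\E[\Xi_n^{(i)}]/i)=0$ after trimming, and the elaborate boundary-layer analysis you sketch is not needed.

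\emph{The heuristic for $p_n(2)=\tfrac12$.} This is the one genuine gap. Your argument ``the two opposite vertices are i.i.d., hence the neighbour is bad with probability $\tfrac12$'' does not work as stated. Conditional on a short edge of half-length $r_0$, the two circumballs overlap (their union has area $a(r_0,r,s)\neq \pi r^2+\pi s^2$), so the two circumradii are \emph{not} independent under the exact Delaunay-pair Palm law; and even if they were, independence alone does not produce $\tfrac12$---the conditional probability $\P(\alpha_4<v_n\mid r_0)$ depends on $r_0$. In the paper the value $\tfrac12$ comes from an explicit double integral: after the Blaschke--Petkantschin change and the substitutions $x=\sin\alpha$, $y=\sin\beta$, one obtains
\[
p_n(2)=\frac{32n}{\tau}\int_0^{v_n}\!\!\int_0^{v_n}\Big(\frac{\alpha\beta}{\alpha^2+\beta^2}\Big)^3 d\alpha\,d\beta+O((\log n)^2/n)=\frac{2nv_n^2}{\tau}+O((\log n)^2/n)=\tfrac12+O((\log n)^2/n),
\]
using $\int_0^1\int_0^1\big(\tfrac{ab}{a^2+b^2}\big)^3\,da\,db=\tfrac{1}{16}$. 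This computation is not a symmetry consequence; you should carry it out (or at least state the integral identity) rather than rely on the i.i.d.\ heuristic.
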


The above theorem can be interpreted as follows. When the size of the window $W_n=[-n^{1/2}/2, n^{1/2}/2]^2$ goes to infinity, the point process of exceedances $\Xi_n$ converges to a compound Poisson point process $\psi$. Using the identification described in Section \ref{sec:basicscompound}, the point process $\psi$ can be seen as a family of points $\{(x_k,m_k):k\geq 1\}\subset \R^d\times \mathbb{N}^*$,  where  $\{x_k:k\geq 1\}$ is a stationary Poisson point process in $W_1$ of intensity $\gamma=\tfrac{3\tau}{4}$ and where the $m_k$'s are i.i.d., independent of the $x_k$'s., and with distribution $\mathbb{Q}(1)=\tfrac{2}{3}$, $\mathbb{Q}(2)=\tfrac{1}{3}$, $\mathbb{Q}(i)=0$ for any $i\geq 3$. In particular, 
\[\PPP{\min_{\mathbf{x}\in \eta_{\neq}^3: z(\mathbf{x})\in W_n} \alpha_{\min}(\Delta(\mathbf{x}))\leq v_n} \conv[n]{\infty} \PPP{\Phi(\R^2)=0} = e^{-\frac{3}{4}\tau}.\]
In the sense of  \cite{CR18}, we obtain that the extremal index for the small angles exists and that it is $\theta=\tfrac{3}{4}$. Intuitively, it means that the mean size of a typical cluster of exceedances is $\theta^{-1}=\tfrac{4}{3}=\mathbb{Q}(1)+2\mathbb{Q}(2)$. To the best of our knowledge,  except a recent work by Basrak \textit{et al.} \cite{BMP22} on the $k$th-nearest neighbor graph, it is the only (non-trivial) example for which the extremal index and the cluster size distribution can be made explicit in the context of Stochastic Geometry. Notice also that in \cite[Section 5]{CR18}, a potential applicability of the main results of that paper to Poisson-Delaunay tessellations is discussed. However, our  method is more general, since it establishes compound Poisson approximation in the $\mathbf{d_2}$ distance with an explicit rate of convergence. 

\begin{proof}[Proof of Theorem \ref{th:d2angles}]
We apply Theorem \ref{Th:CP} to the  function $g_n: (\R^2)^3\times \mathbf{N}\rightarrow\{0,1\} $ defined, for any $\omega\in \mathbf{N}$ (in general position) and for any $\mathbf{x}=(x_1,x_2,x_3) \in \omega_{\neq}^3$, by \[g_n(\mathbf{x},\omega) = \one\{\alpha_{\min}(\Delta(\mathbf{x}))<v_n\} \one\{\Delta(\mathbf{x}) \in \mathbf{Del}(\omega)\}.  \] Condition ({\bf C}) holds by taking $C_n=B_{\log n}$. Moreover, Condition ({\bf M}) is satisfied since \[\gamma_n = \PPP{\alpha_{\min}(0) < v_n}\] is positive. To check Condition \eqref{eq:s*}, we consider the following radius of stabilization:
\[R(z(\mathbf{x}), \omega) = \tilde{R}(\mathbf{x})\one\{\Delta(\mathbf{x})\in \mathbf{Del}(\omega)\},\]
where  $\tilde{R}(\mathbf{x})$ denotes the circumradius of $\Delta(\mathbf{x})$. Let $\mathbf{y}\in (\R^2)^\ell$, with $\ell=0,\ldots, 3$. It follows from the definition of $\mathbf{Del}(\eta^{\mathbf{x},\mathbf{y}})$ that 
\begin{align*}\PPP{\Delta(\mathbf{x})\in \mathbf{Del}(\eta^{\mathbf{x}, \mathbf{y}})} & \leq \PPP{\Delta(\mathbf{x})\in \mathbf{Del}(\eta^{\mathbf{x}})}\\
& = \PPP{\eta\cap B(\mathbf{x}) = \emptyset}\\
&  = e^{-\pi (\tilde{R}(\mathbf{x}))^2},
\end{align*} where $B(\mathbf{x})$ denotes the circumball of $\Delta(\mathbf{x})$. Therefore, for any $r>0$, we have 
\[\PPP{R(z(\mathbf{x}), \eta) > r } \leq  e^{-\pi\tilde{R}(\mathbf{x})^2 }\one\{\tilde{R}(\mathbf{x})>r\} \leq e^{-\pi r^2}. \]
In particular, for any sequence $(r_n)$, we have
\begin{equation*}
\int_{(\R^2)^{k+\ell}}   \one\{z(\xx)\in W_n\} \one\{z(\xx_{k-\ell},\yy)\in W_n \cap (C_n)_{z(\xx)}\}\PPP{R_n(z(\xx),\xi^{\xx,\yy}) > r_n} \d (\xx,\yy) \leq cn^2 e^{-\pi r_n^2}.
\end{equation*}
When $r_n\geq (\log n)^{1/2+\varepsilon}$ for some $\varepsilon>0$, the right-hand side is smaller than $e^{-\pi r_n}$ for $n$ large enough. This proves that Condition  ({\bf S*}) holds.

Now, applying Theorem \ref{Th:CP}, we get	
\begin{multline*}
		\mathbf{d_2}(\mathcal L \Xi_n,\mathsf{CP}(\pi_1,\pi_2,\dots)) \\
		{\le \frac{c_1}{n} +  e^{n \g_n } \Big(\sum_{i \ge 1} \Big\{ |i\pi_{i}(\R^2)- \E[\Xi_n^{(i)}(\R^2)] | + \min\{\pi_{i}(\R^2), \E[\Xi_n^{(i)}]/i\} \hat d_1(\pi_{i},\E[\Xi_n^{(i)}]/i) \Big\}}\\
		+c_2n \g_n^2(\log n)^2 +\frac{c_3(\log n)^2}{n}\Big),
	\end{multline*}	
	 In the above equation, the point process $\Xi_n^{(i)}$ is defined as
\[ \Xi_n^{(i)} = \sum_{\mathbf{x}\in \eta_{\neq}^3} \one\{z(\mathbf{x})\in W_n\}\one\{ \alpha_{\min}(\Delta(\mathbf{x}))<v_n\} \one\{\Xi_n(C_{\mathbf{x}})=i\} \delta_{n^{-1/2}z(\mathbf{x})},  \]
where $C_{\mathbf{x}}=B_{\log n}(z(\mathbf{x}))$. Now, because $2n \PPP{\alpha_{\min}(0) < v_n}\conv[n]{\infty}\tau $, we have
\begin{multline}
\label{eq:boundd2angles}
		\mathbf{d_2}(\mathcal L \Xi_n,\mathsf{CP}(\pi_1,\pi_2,\dots)) \\
		\le c\,\Big(\sum_{i \ge 1} \Big\{ |i\pi_{i}(\R^2)- \E[\Xi_n^{(i)}(\R^2)] | + \min\{\pi_{i}(\R^2), \E[\Xi_n^{(i)}(\R^2)]/i\} \hat d_1(\pi_{i},\E[\Xi_n^{(i)}]/i) \Big\}+\frac{(\log n)^2}{n}\Big),
	\end{multline}
To deal with the terms appearing in the sum, we first give an estimate of 	$\E[\Xi_n^{(i)}(A)]$ for any Borel subset $A\subset \R^2$. According to the multivariate Mecke equation \cite[Theorem 4.4]{LastPenrose}, we have
\begin{align*}
\E[\Xi_n^{(i)}(A)] & = \E\Big[  \sum_{\mathbf{x}\in \eta^3_{\neq}}  \one\{ \alpha_{\min}(\Delta(\mathbf{x}))<v_n\} \one\{\Xi_n(B_{\log n}(z(\mathbf{x})))=i\}\one\{ z(\mathbf{x})\in (A_n\cap W_n)\}   \Big]\\
& = \int_{(\R^2)^3}\PPP{  \alpha_{\min}(\Delta(\mathbf{x}))<v_n, \Xi_n[\eta_{\R^2\setminus B(\mathbf{x})}\cup\{\mathbf{x}\}] (B_{\log n}(z(\mathbf{x}))=i  }\one\{z(\mathbf{x})\in (A_n\cap W_n)\} \mathrm{d} \mathbf{x},
\end{align*}
where $\eta_{\R^2\setminus B(\mathbf{x})}$ is a Poisson point process of intensity $1$ in $\R^2\setminus B(\mathbf{x})$ and where $A_n=n^{1/2}A$. In the above expression, with a slight abuse of notation, we have written $\{\xx\}=\{x_1,x_2,x_3\}$ for any $\xx=(x_1,x_2,x_3)\in (\R^2)^3$. The integral can be interpreted in terms of typical Delaunay triangle. Indeed, let $R$ and $\mathbf{U}=(U_1,U_2,U_3)$ be the circumradius and the angles of the vertices of $\Delta_0$. Then $R$ and $\mathbf{U}=(U_1,U_2,U_3)$ are by \cite[Theorem 10.4.4]{SW08} independent random variables with distributions
\begin{equation} \label{eq:typicaldelaunayR} \PPP{R\leq s} = 2(n \pi)^2\int_0^sr^{3}e^{-n\pi r^2}\mathrm{d}r\end{equation} and
\begin{equation} \label{eq:typicaldelaunayU}\PPP{\mathbf{U} \in E} = \frac{1}{12\pi^2}\int_{\SSS}\cdots \int_{\SSS}|\Delta(\mathbf{u})|\ind{\mathbf{u}\in E}\sigma (\mathrm{d}\mathbf{u})\end{equation} for any $s\geq 0$ and for any measurable set $E\subset \SSS^{3}$, where $\SSS$ denotes the unit circle and where $\sigma$ denotes the uniform distribution on $\SSS$ with normalization $\sigma(\SSS)=2\pi$. Integrating over $z\in (A_n\cap W_n)$ and using the fact that $\Delta_0 \overset{d}{=}\Delta(R\mathbf{U})$, we obtain
\begin{align}
\label{eq:estimateexiangles}
\E[\Xi_n^{(i)}(A)] & = 2n |A\cap W_1|\PPP{\alpha_{\min}(\Delta(R\mathbf{U}))<v_n, \Xi_n[\eta_{\R^2\setminus B_R} \cup \{R\mathbf{U}\}](B_{\log n})=i}\notag\\
& = 2n|A\cap W_1|\PPP{\alpha_{\min}(\Delta_0)<v_n}p_n(i)\notag\\
& = (\tau + \varepsilon_n) |A\cap W_1| \ p_n(i),
\end{align}
where
\begin{equation}
	p_n(i) = \PPP{ \left.    \Xi_n[\eta_{\RR^2\setminus B_R} \cup \{R\mathbf{U}\}] (B_{\log n})= i \right|  \alpha_{\min}(\Delta(R\mathbf{U}))<v_n   }\label{eqn:pin}
\end{equation}
and where $(\varepsilon_n)$ is some sequence, only depending on $n$, such that $\varepsilon_n\conv[n]{\infty}0$. In Equation \eqref{eqn:pin}, conditional on $R$, the Poisson point process $\eta_{\RR^2\setminus B_R}$ is independent of $\mathbf{U}$. The following proposition gives the order of $p_n(i)$ for each $i\geq 3$ as $n$ goes to infinity.

\begin{prop}
 \label{th:angles}
With the above notation,
\begin{enumerate}
\item $p_n(1) = 1/2 + O((\log n)^2/n)$;
\item  $p_n(2) = 1/2 + O((\log n)^2/n)$;
\item $\sum_{i\geq 3}p_n(i) = O((\log n)^2/n)$.
\end{enumerate} 
\end{prop}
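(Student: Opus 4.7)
The plan is to reduce the computation of $p_n(i)$ to a single void-probability estimate for the twin Delaunay triangle that shares the short edge of the typical triangle. First, using the representation $\Delta_0 \stackrel{d}{=} \Delta(R\mathbf{U})$ together with the laws in \eqref{eq:typicaldelaunayR}--\eqref{eq:typicaldelaunayU}, I would condition on $\alpha_{\min}(\Delta(R\mathbf{U}))<v_n$. By the explicit density $f$ and symmetry, with probability $1-O(v_n^2)$ the minimum angle is attained at a single vertex, say $x_3=RU_3$; then $e:=\|x_1-x_2\|=2R\sin\alpha_3=O(n^{-1/2})$. Place coordinates so that the midpoint of $[x_1,x_2]$ is at the origin, $x_1 x_2$ lies along the horizontal axis $\ell$ and $x_3$ lies in the upper half-plane $H^+$. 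For any candidate third vertex $y=(a,b)\in H^-$ of the twin $\Delta'=(x_1,x_2,y)$, the circumcenter of $\Delta'$ lies on the $y$-axis at signed height
\[
c_y = \frac{a^2+b^2-e^2/4}{2b},\qquad R_y = \sqrt{c_y^2+e^2/4}.
\]
Since $B_c\cap H^-$ (where $B_c$ is the disk through $x_1,x_2$ with center $(0,c)$) is monotonically increasing as $c$ decreases, the Delaunay neighbor $y$ of $\{x_1,x_2\}$ in $H^-$ coincides with the $c_y$-maximizer in $\eta\cap H^-\setminus B(\Delta_0)$.

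The main step is the computation of $\PPP{\alpha_{\min}(\Delta')<v_n\mid R,\alpha_3}$. Since $\sin\alpha'_y=e/(2R_y)$, the angle at $y$ is smaller than $v_n$ iff $|c_y|>c^*:=e/(2\sin v_n)=(e/(2v_n))(1+o(1))$, while the other two angles of $\Delta'$ are bounded away from $0$ outside a thin strip $|b|=O(e v_n)=O(n^{-1})$ of negligible probability. The region $\{c_y>c^*\}\cap H^-$ is a thin cap of area $O(e^2 v_n)=O(n^{-3/2})$ whose void probability differs from $1$ by $O(n^{-1})$, and the region $\{c_y>-c^*\}\cap H^-$ is, up to boundary corrections of lower order, a disk of radius $c^*(1+o(1))$ centered at $(0,-c^*)$ with area $\pi c^{*2}(1+o(1))$. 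Using Slivnyak and the Poisson void probability outside $B(\Delta_0)$,
\[
\PPP{\alpha_{\min}(\Delta')<v_n\mid R,\alpha_3} = 1 - \exp\!\Bigl(-\frac{\pi e^2}{4 v_n^2}\Bigr) + O(n^{-1}).
\]
Setting $s=\alpha_3/v_n\in(0,1)$, so that $e^2/(4v_n^2)=R^2 s^2(1+O(v_n^2))$, and averaging against the independent laws of $R$ (density $2\pi^2 r^3 e^{-\pi r^2}$) and of $s$ (density $2s$ from the expansion $f(t)=4t+O(t^2)$ near zero), one obtains
\[
\int_0^\infty\!\int_0^1 (1-e^{-\pi r^2 s^2})\,2s\,ds\,2\pi^2 r^3 e^{-\pi r^2}\,dr = 1 - 2\pi\int_0^\infty r(e^{-\pi r^2}-e^{-2\pi r^2})\,dr = \frac{1}{2}.
\]

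To conclude, I would bound $\sum_{i\ge 3}p_n(i)$ via a direct Mecke-type argument. Any additional small-angle Delaunay triangle with circumcenter in $B_{\log n}$ beyond $\Delta_0,\Delta'$ must contain its own pair of Poisson points at distance $O(v_n)$, independent of $\{x_1,x_2\}$ since the short edge $x_1 x_2$ is shared by exactly two Delaunay triangles. The Mecke formula together with $\PPP{\alpha_{\min}(\Delta_0)<v_n}=O(1/n)$ and $|B_{\log n}|=O((\log n)^2)$ shows that the expected number of such extra small-angle Delaunay triangles is $O((\log n)^2/n)$. Combining with the main computation gives $p_n(2)=\frac{1}{2}+O((\log n)^2/n)$, and then $p_n(1)=1-p_n(2)-\sum_{i\ge 3}p_n(i)=\frac{1}{2}+O((\log n)^2/n)$. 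The main obstacle is the void-probability estimate: making the $O(n^{-1})$ correction uniform in $(R,\alpha_3)$ requires careful handling of the thin cap of $B(\Delta_0)$ intruding into $H^-$, of the constraint $x_3\notin B(\Delta')$ (automatic only on the main branch $c_y<-c^*$), and of the interaction between the stabilization radius $R_n$ and the compound Poisson window $B_{\log n}$ needed to extract the sharp $O((\log n)^2/n)$ rate.
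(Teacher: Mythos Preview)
Your approach is genuinely different from the paper's and the overall strategy is sound, but there is a sign error in the central void--probability step that you should fix.

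\textbf{Comparison with the paper.} For part (2), the paper writes $p_n(2)$ as a four-point expectation via Mecke, applies the midpoint change of variables $T$ to $(x_1,x_2)$, then uses the Blaschke--Petkantschin formula of \cite{Nikitenko} \emph{twice} (once for $x_3$, once for $x_4$) to reduce everything to an explicit double integral, which after the substitution $x=\sin\alpha,\,y=\sin\beta$ becomes $\frac{32n}{\tau}\int_0^{v_n}\!\int_0^{v_n}\bigl(\tfrac{\alpha\beta}{\alpha^2+\beta^2}\bigr)^3 d\alpha\,d\beta=\tfrac12+O((\log n)^2/n)$. Your route avoids the second Blaschke--Petkantschin step entirely: you condition on $(R,\alpha_3)$, identify the twin triangle $\Delta'$ via the $c_y$-maximizer in $H^-$, and reduce $\PPP{\alpha_{\min}(\Delta')<v_n\mid R,\alpha_3}$ to a single Poisson void probability, then average against the product law of $R$ and $s=\alpha_3/v_n$. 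This is more transparent geometrically and yields the constant $\tfrac12$ through a short closed-form computation; the paper's integral route is more mechanical but makes the error control entirely explicit. For part (3) both arguments are the same in spirit (Mecke plus a case split over shared vertices), though the paper carries out the decomposition into $p_0^{(n)},p_{1;j}^{(n)},p_{2;j\text{-}k}^{(n)}$ in detail, whereas you assert that any third small-angle triangle must contain a new short Poisson pair --- that is not literally true (it may share one vertex of the short edge with $\Delta_0$), so you would still need the paper's case analysis to make (3) rigorous.

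\textbf{The sign error.} You correctly note that $\alpha'_y<v_n$ iff $|c_y|>c^*$, and that the Delaunay neighbour $y$ is the $c_y$-maximizer. But then $\{|c_y|>c^*\}$ is, up to the thin cap $\{c_y>c^*\}$, the event $\{c_y<-c^*\}$, which is the event that the disk $\{c_{y'}>-c^*\}\cap H^-$ of area $\pi c^{*2}(1+o(1))$ is \emph{empty}. Hence
\[
\PPP{\alpha_{\min}(\Delta')<v_n\mid R,\alpha_3}=\exp\!\Bigl(-\frac{\pi e^2}{4v_n^2}\Bigr)+O(n^{-1}),
\]
not $1-\exp(\cdot)$. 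Your displayed formula computes $\PPP{\alpha'_y\ge v_n}$. Fortuitously, since the averaged densities integrate to $1$ and the target value is $\tfrac12$, your final integral still gives the correct answer; with the corrected integrand $e^{-\pi r^2 s^2}$ one gets $2\pi\int_0^\infty r(e^{-\pi r^2}-e^{-2\pi r^2})\,dr=\tfrac12$ directly. You should also verify the uniformity of the $O(n^{-1})$ remainder after integrating over $R$ (the error in the exponent is $O(R^2 v_n^2)$, which is controlled by the Gaussian-type tail of $R$), and check that the Delaunay condition for $\Delta'$ on the $H^+$ side (no Poisson points and $x_3$ outside the small cap $B(\Delta')\cap H^+$) only contributes at order $O(e^2 v_n)=O(n^{-3/2})$.
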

Theorem \ref{th:d2angles} is a direct consequence of the above proposition, Equations \eqref{eq:boundd2angles}, \eqref{eq:estimateexiangles} and of the fact that $\pi_i(\R^2)=0$ for any $i\geq 3$.  It remains to prove Proposition \ref{th:angles}.
\end{proof}

\begin{proof}[Proof of Proposition \ref{th:angles}]
\textit{Proof of (3)}.   We start with this assertion since it is the simplest case.  Roughly, the latter can be explained by the following heuristic argument. If a Delaunay triangle $\Delta=\Delta(U_1,U_2,U_3)$ has a small angle, say  $\alpha(\overrightarrow{U_3U_1}, \overrightarrow{U_3U_2})$, of order $v_n \eq[n]{\infty} cn^{-1/2}$ then  the opposite side, namely $[U_1,U_2]$, has a length of order $c n^{-1/2}\times n^{-1/2}=c n^{-1}$, which corresponds to the minimal interpoint distance. The other triangle which shares the same edge has also an angle of order $n^{-1/2}$; and thus it is possible that clusters are of size at least 2. Clusters cannot be of size $i\geq 3$, since this would imply that there exists another triangle, whose edges are not $[U_1,U_2]$, with a small angle. This should imply that there is in the neighborhood another point which is distant to $cn^{-1}$ to its nearest neighbor, which is not possible with high probability (see e.g. \cite{H82}). To prove (3), we first notice that 
\begin{align*}
\sum_{i \ge 3}	p_n(i) & \eq[n]{\infty}  \frac{2}{\tau} n \PPP{  \Xi_n[\eta_{\RR^2\setminus B_R} \cup \{R\mathbf{U}\}] (B_{\log n}) \ge 3,  \alpha_{\min}(\Delta(R\mathbf{U}))<v_n } \\
	& \leq  \frac{6}{\tau} n \PPP{ \Xi_n[\eta_{\RR^2\setminus B_R} \cup \{R\mathbf{U}\}] (B_{\log n})\ge  3,  \alpha_3<v_n},
\end{align*}
where  $\alpha_3=\alpha_3(\Delta(R\mathbf{U}))$ denotes the measure of the angle between $\overrightarrow{U_3U_1}$ and $\overrightarrow{U_3U_2}$. To prove that the above term converges to 0, the main task is to observe the following: if there exist three exceedances, then necessarily there exists a triangle $\Delta(x_1,x_2,x_3)$ with $\alpha_3<v_n$ and another one with edges which are different  from $[x_2,x_3]$. More precisely, we have
\begin{equation}
	\label{eq:boundpi3}
	 n \PPP{ \Xi_n[\eta_{\RR^2\setminus B_R} \cup \{R\mathbf{U}\}] (B_{\log n}) \geq 3,  \alpha_3<v_n} \leq p_0^{(n)}+p_{1;1}^{(n)}+p_{1;2}^{(n)}+p_{1;3}^{(n)}+p_{2;1-3}^{(n)}+p_{2;2-3}^{(n)},
\end{equation}
where
\begin{multline*}
	p_0^{(n)} = n\PP\Big(\exists (x_4,x_5,x_6)\in (\eta_{\RR^2\setminus B_R})^3: \Delta(x_{4:6})\in \del(\eta_{\RR^2\setminus B_R}\cup\{R\mathbf{U}\}),\\ 
	z(x_4,x_5,x_6)\in B_{\log n},  \alpha_{\min}(\Delta(x_{4:6}))<v_n, \alpha_3<v_n\Big),
\end{multline*}
\begin{multline*}
	p_{1;1}^{(n)} = n\PP\Big(\exists (x_4,x_5)\in (\eta_{\RR^2\setminus B_R})^2: \Delta(x_4,x_5,RU_1)\in \del(\eta_{\RR^2\setminus B_R}\cup\{R\mathbf{U}\}),\\ 
	z(x_4,x_5,RU_1)\in B_{\log n},  \alpha_{\min}(\Delta(x_4,x_5,RU_1))<v_n, \alpha_3<v_n\Big),
\end{multline*}
\begin{multline*}
	p_{1;2}^{(n)} = n\PP\Big(\exists (x_4,x_5)\in (\eta_{\RR^2\setminus B_R})^2: \Delta(x_4,x_5,RU_2)\in \del(\eta_{\RR^2\setminus B_R}\cup\{R\mathbf{U}\}),\\ 
	z(x_4,x_5,RU_2)\in B_{\log n},  \alpha_{\min}(\Delta(x_4,x_5,RU_2))<v_n, \alpha_3<v_n\Big),
\end{multline*}
\begin{multline*}
	p_{1;3}^{(n)} = n\PP\Big(\exists (x_4,x_5)\in (\eta_{\RR^2\setminus B_R})^2: \Delta(x_4,x_5,RU_3)\in \del(\eta_{\RR^2\setminus B_R}\cup\{R\mathbf{U}\}),\\ 
	z(x_4,x_5,RU_3)\in B_{\log n},  \alpha_{\min}(\Delta(x_4,x_5,RU_3))<v_n, \alpha_3<v_n\Big),
\end{multline*}
\begin{multline*}
	p_{2;1-3}^{(n)} = n\PP\Big(\exists x_4\in \eta_{\RR^2\setminus B_R}: \Delta(x_4,RU_1,RU_3)\in \del(\eta_{\RR^2\setminus B_R}\cup\{R\mathbf{U}\}),\\ 
	z(x_4,RU_1,RU_3)\in B_{\log n},  \alpha_{\min}(\Delta(x_4,RU_1,RU_3))<v_n, \alpha_3<v_n\Big),
\end{multline*}
\begin{multline*}
	p_{2;2-3}^{(n)} = n\PP\Big(\exists x_4\in \eta_{\RR^2\setminus B_R}: \Delta(x_4,RU_2,RU_3)\in \del(\eta_{\RR^2\setminus B_R}\cup\{R\mathbf{U}\}),\\ 
	z(x_4,RU_2,RU_3)\in B_{\log n},  \alpha_{\min}(\Delta(x_4,RU_2,RU_3))<v_n, \alpha_3<v_n\Big).
\end{multline*}
In the above equations and in the following, the terms $x_i$'s are assumed to be different. We only deal with the term $p_0^{(n)} $ since the other one can be dealt with in a similar way and because it is the largest term (due to the fact that it concerns triangles with no common point). To do it, we write
\begin{equation*}
	p_0^{(n)}  =  n\EE\Big[\sum_{(x_4,x_5,x_6)\in (\eta_{\RR^2\setminus B_R})^3} \ind{\Delta(x_4,x_5,x_6)\in \del(\eta_{\RR^2\setminus B_R}\cup\{R\mathbf{U}\})}\ind{z(x_4,x_5,x_6)\in B_{\log n}} \ind{\alpha_{\min}(\Delta(x_4,x_5,x_6))<v_n}\ind{\alpha_3<v_n}\Big].
\end{equation*}
From the multivariate Mecke equation and  \eqref{eq:typicaldelaunayR}, \eqref{eq:typicaldelaunayU}, we have 
\begin{multline*}
	p_0^{(n)} = c n\int_{(\RR^2)^3}\int_{\RR_+}\int_{\SSS^3}e^{-|B_r \cup B(x_1,x_2,x_3)| } r^3 |\Delta(\mathbf{u})|\one\{z(x_4,x_5,x_6)\in B_{\log n}\} \\
	\times  \one\{\alpha_{\min}(\Delta(x_4, x_5, x_6))<v_n\}\one\{\alpha_3<v_n\}\sigma(\mathrm{d}\mathbf{u})\mathrm{d}r\mathrm{d}x_4\mathrm{d}x_5\mathrm{d}x_6.
\end{multline*}
 Applying the Blaschke-Petkantschin type of formula  \cite[Theorem 7.3.1]{SW08}, we get
\[
	p_0^{(n)} = c n \int_{B_{\log n}}\int_{\RR_+^2}\int_{\SSS^3\times \SSS^3} e^{-|B_r\cup B_{r'}(z')|}r^3 (r')^3 |\Delta(\mathbf{u})||\Delta(\mathbf{u}')|\ind{\alpha_3<v_n} \ind{\alpha_6<v_n} \sigma(\mathrm{d}\mathbf{u})\sigma(\mathrm{d}\mathbf{u}')\mathrm{d}r\mathrm{d}r'\mathrm{d}z',
\]
where $\mathbf{u}'=(u_4,u_5,u_6)\in \SSS^3$ and where $\alpha_6$ denotes the value of the angle between $\overrightarrow{u'_6u'_4}$ and $\overrightarrow{u'_6u'_5}$. Assuming without loss of generality that $r\leq r'$ and using the fact that $|B_r\cup B_{r'}(z')|\geq \pi r^2$, we have
\[
	p_0^{(n)} \leq c n \left(\int_{\SSS^3} |\Delta(\mathbf{u})|\ind{\alpha_3<v_n}  \sigma(\mathrm{d}\mathbf{u})\right)^2 \int_{B_{\log n}}\int_{\RR_+^2} e^{-\pi r^2}r^3 (r')^3 \ind{r'\leq r} \mathrm{d}r\mathrm{d}r'\mathrm{d}z' .
\]
Besides, according to \eqref{eq:typicaldelaunayU}, we know that 
\[ \int_{\SSS^3} |\Delta(\mathbf{u})|\ind{\alpha_3<v_n}  \sigma(\mathrm{d}\mathbf{u}) = c\PPP{\alpha_{\min}(\Delta(R\mathbf{U}))<v_n} = c\PPP{\alpha_{\min}(\Delta_0)<v_n} \eq[n]{\infty}c n^{-1}.\] This implies $p_0^{(n)}  = O\left((\log n)^2/n\right)$. In the same way, we can prove that the other terms appearing in \eqref{eq:boundpi3} are $O\left(1/n\right)$. This concludes the proof of (3). 

\textit{Proof of (1) and (2)}. It is sufficient to deal with (1) since $\sum_{i=1}^\infty p_n(i)=1$. To do it, we first observe that
\[
p_n(2) = \frac{\EEE{\one\{   \Xi_n[\eta_{\RR^2\setminus B_R} \cup \{R\mathbf{U}\}] (B_{\log n})= 2   \} \one\{\alpha_{\min}(\Delta(R\mathbf{U}))<v_n\}}}{\PPP{\alpha_{\min}(\Delta(R\mathbf{U}))<v_n}}.
\]
Moreover,  the Palm version of $\Xi_n=\Xi_n[\eta]$ satisfies the following equation:
\[\Xi_n^0 \overset{d}{=} \Xi_n[\eta_{\RR^2\setminus B_R}\cup \{R\mathbf{U}\}]\cup\{0\}.\]
This together with  \eqref{eq:defpalm} gives that
\[p_n(2) = \frac{\EEE{\sum_{z \in \Xi_n \cap W_n} \I\{\Xi_n(z+B_{\log n})=2\}}}{2n\PPP{\alpha_{\min}(\Delta(R\mathbf{U}))<v_n}}.\]
Since we will prove below that the numerator converges to some positive constant, it follows from \eqref{eq:secondordertypical} that
\[p_n(2) = \frac{1}{\tau} \EEE{\sum_{z \in \Xi_n \cap W_n} \I\{\Xi_n(z+B_{\log n})=2\}} + O(1/n).\]
Now we count all points in $z\in \Xi_n \cap W_n$, for which there is another triangle with minimum angle smaller than $v_n$ and with circumcenter in $z+B_{\log n}$. Using the same arguments as in the proof of (3), we can easily show that it is unlikely that these two triangles have different minimum edges and, therefore, different centres. Thus 
\begin{figure}
\includegraphics[scale=0.35]{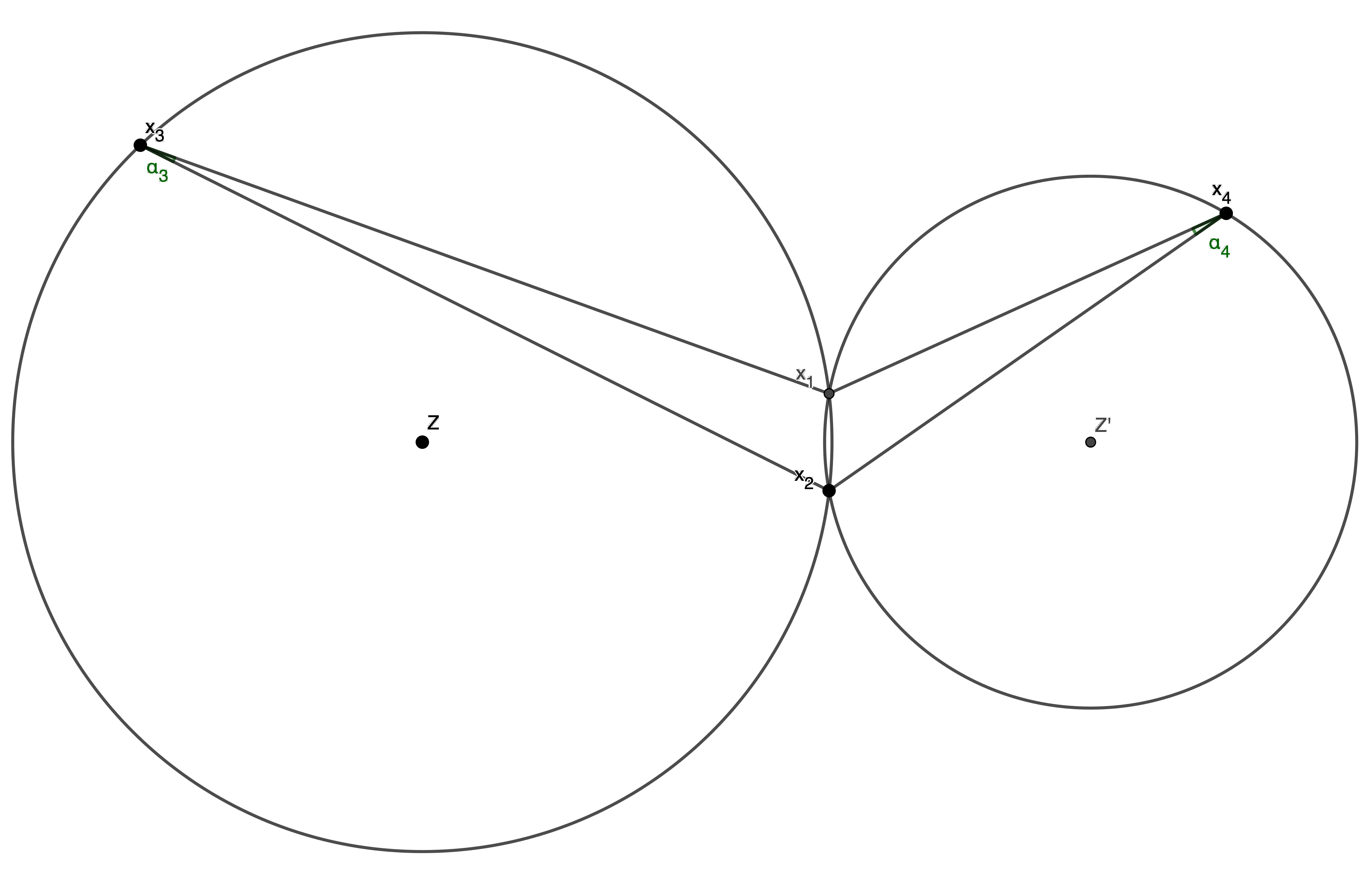}
\caption{\label{fig:angles} Two Delaunay triangles with small angles}
\end{figure}
\begin{multline*}
p_n(2) = \frac{1}{2\tau} \E\Big[\sum_{\mathbf{x}\in \eta_{\neq}^3} \sum_{x_4 \in \eta \setminus \{\mathbf{x}\}} \ind{\eta \cap B(\mathbf{x})=\emptyset}  \ind{\eta \cap B(x_1,x_2,x_4)=\emptyset}  \ind{z(\mathbf{x})\in W_n} \one\{z(x_1,x_2,x_4) \in (z(\mathbf{x}) + B_{\log n})\cap W_n\}\\
	\times \ind{\alpha_3<v_n}   \ind{\alpha_3=\alpha_{\min}(\Delta(\mathbf{x}))} \ind{\alpha_4<v_n}\ind{\alpha_4=\alpha_{\min}(\Delta(x_{1},x_2,x_4))} \Big] + O\left((\log n)^2/n \right),
\end{multline*}
where $\alpha_3$ is the value of the angle with apex $x_3$ in the triangle with vertices $x_1,x_2,x_3$ and $\alpha_4$ is the value of the angle with apex $x_4$ in the triangle with vertices $x_1,x_2,x_4$ (see Figure \ref{fig:angles}).  To do it, we apply the multivariate Mecke equation. This gives 
\begin{multline*}
p_n(2) =  \frac{1}{2\tau} \int_{(\mathbb R^2)^4} e^{-|B(\mathbf{x})\cup B(x_1,x_2,x_4)|} \one\{x_3,x_4 \notin B(\mathbf{x}) \cup B(x_1,x_2,x_4)\}  \one\{z(\mathbf{x})\in W_n\} \\\times \one\{\alpha_3<v_n\}\one\{\alpha_4<v_n\}\one\{\alpha_3=\alpha_{\min}(\Delta(\mathbf{x}))\} \one\{\alpha_4=\alpha_{\min}(\Delta(x_{1},x_2,x_4))\}\mathrm{d}\mathbf{x}\mathrm{d}x_4 + O((\log n)^2/n). 
\end{multline*}
In the above equation, the event $\{z(x_1,x_2,x_4) \in (z(\mathbf{x}) + B_{\log n})\cap W_n\}$ has disappeared because, on the complement of this event, it is unlikely that $\eta\cap B(x_1,x_2,x_4)=\emptyset$. Now, to deal with the right-hand side, we apply the transformation defined by the differentiable mapping 
\begin{align}
T:\R^2 \times \R_+ \times \SSS \longrightarrow (\R^2)^2,\label{trafo}
\end{align}
which is defined by
$$
(z,r_0,u) \longmapsto (z+r_0u,z-r_0u)=:(x_1,x_2). 
$$
 The function $T$ is bijective up to a set of measure zero. It follows by a standard argument that introduces local coordinates (see the proof of Theorem 7.3.1 in \cite{SW08}) that $T$ has Jacobian $4r_0$. Writing $a(z\pm r_0 u, x_3,x_4)$ for the area of $B(x_1,x_2,x_3)\cup B(x_1,x_2,x_4)$ with $x_1=z+ru$ and $x_2=z-ru$, we have 
\begin{multline*}
p_n(2) =  \frac{2}{\tau} \int_{(\mathbb R^2)^2}  \int_0^\infty \int_{\SSS} \int_{W_n} e^{-a(z\pm r_0 u, x_3,x_4)} \one\{x_3,x_4 \notin B(z\pm r_0u,x_3) \cup B(z\pm r_0u,x_4)\} \one\{\alpha_3<v_n\}  \\
 \times \one\{\alpha_4<v_n\}\one\{\alpha_3=\alpha_{\min}(\Delta(z\pm r_0u,x_3))\} \one\{\alpha_4=\alpha_{\min}(\Delta(z\pm r_0u,x_4))\}r_0\,\mathrm{d}z\,\sigma(\mathrm{d}u)\,\mathrm{d}r_0\,\mathrm{d}x_3\mathrm{d}x_4  + O(\log n)^2/n.
\end{multline*}
Integrating  $(z,u)$ over $W_n\times \SSS$ and denoting by $e_1=(1,0)$ the first vector of the standard basis of $\R^2$, we get
\begin{multline*}
p_n(2) =  \frac{4\pi n}{\tau} \int_{(\mathbb R^2)^2}  \int_0^\infty  e^{-a(\pm r_0 e_1, x_3,x_4)} \one\{x_3,x_4 \notin B(\pm r_0e_1,x_3) \cup B(\pm r_0e_1,x_4)\} \one\{\alpha_3<v_n\}  \\
 \times \one\{\alpha_4<v_n\}\one\{\alpha_3=\alpha_{\min}(\Delta(\pm r_0e_1,x_3))\} \one\{\alpha_4=\alpha_{\min}(\Delta(\pm r_0e_1,x_4))\}r_0\,\mathrm{d}r_0\,\mathrm{d}x_3\mathrm{d}x_4+ O(\log n)^2/n. 
\end{multline*}
	Note that the conditions $x_3,x_4 \notin B(\pm r_0e_1,x_3) \cup B(\pm r_0e_1,x_4)$ is satisfied if and only if $x_3 \in \mathbb R \times \mathbb R_+$ and $x_4 \in \mathbb R \times \mathbb R_-$ (or vice versa). Therefore
\begin{align*}
	p_n(2) = \frac{8\pi n}{\tau} \int_{(\mathbb R \times \mathbb R_+) \times (\mathbb R \times \mathbb R_-)}&  \int_0^\infty   e^{-a(\pm r_0 e_1, x_3,x_4)} \one\{x_3,x_4 \notin B(\pm r_0e_1,x_3) \cup B(\pm r_0e_1,x_4)\} \\
	&\quad	\times\one\{\alpha_3<v_n\}   \one\{\alpha_4<v_n\}\one\{\alpha_3=\alpha_{\min}(\Delta(\pm r_0e_1,x_3))\}\\ &\quad \times \one\{\alpha_4=\alpha_{\min}(\Delta(\pm r_0e_1,x_4))\}r_0\,\mathrm{d}r_0\,\mathrm{d}x_3\mathrm{d}x_4+ O(\log n)^2/n.
\end{align*}
Here we apply twice the Blaschke-Petkantschin formula from Theorem 5 in \cite{Nikitenko} and obtain
\begin{align*}
	\frac{32\pi n}{\tau} \int_{\frac{2r_0}{\sqrt 3}}^\infty \int_{\frac{2r_0}{\sqrt 3}}^\infty &\int_0^\infty   \one\{\arcsin(r_0/r)\le v_n\}   \Big((\pi-3\arcsin(r_0/r))+\frac{r\sin(3 \arcsin(r_0/r))}{\sqrt{r^2-r_0^2}}\Big)\\
	&\times \one\{\arcsin(r_0/s)\le v_n\}  \Big((\pi-3\arcsin(r_0/s))+\frac{s\sin(3\arcsin(r_0/s))}{\sqrt{s^2-r_0^2}}\Big)\\
	&\times e^{-na( r_0,r,s)}r s r_0 \,\,\mathrm{d}s\,\mathrm{d}r\mathrm{d}r_0+ O(\log n)^2/n,
\end{align*}
where $a(r_0,r,s):=|B_r(\sqrt{r^2-r_0^2}e_2) \cup B_s(-\sqrt{s^2-r_0^2}e_2)|$, with $e_2=(0,1)$. Now, consider the change of variables $r=\frac{r_0}x$ and $s=\frac{r_0}{y}$. Using the fact that $a(r_0,xr_0,yr_0)=r_0^2 a(1,x,y)$ and integrating $r_0\in (0,\infty)$, we get 
\begin{align*}
	\frac{32\pi n}{\tau}&\int_0^{\frac{\sqrt 3}{2}} \int_0^{\frac{\sqrt 3}{2}}   \one\{\arcsin(x)\le v_n\}   \left((\pi-3\arcsin(x))+\frac{\sin(3 \arcsin(x))}{\sqrt{1-x^2}}\right)\\
	&\,	\times \one\{\arcsin(y)\le v_n\}  \left((\pi-3\arcsin(y))+\frac{s\sin(3\arcsin(y))}{\sqrt{1-y^2}}\right)\frac{1}{x^3y^3a(1,x,y)^3} \,\mathrm{d}y\,\mathrm{d}x\\
	&+ O(\log n)^2/n.
\end{align*}
It is an elementary calculation to derive that
$$
a(1,x,y)=\frac{\pi-\arcsin(x)}{x^2}+\frac{\sqrt{1-x^2}}{x}+
\frac{\pi-\arcsin(y)}{y^2}+\frac{\sqrt{1-y^2}}{y},\quad  0 < x,y \le \frac{\sqrt 3}{2}.$$
Next we substitute $x=\sin \alpha$ and $y=\sin \beta$ and thus arrive at
\begin{multline*}
	p_n(2) \eq[n]{\infty}	\frac{32\pi n}{\tau} \int_0^{\frac{\pi}{3}\wedge v_n} \int_0^{\frac{\pi}{3}\wedge v_n} \big((\pi-3\alpha)\cos(\alpha)+\sin(3\alpha)\big) \big((\pi-3\beta)\cos(\beta)+\sin(3\beta)\big)\\
	\quad	\times \left(\frac{\sin(\alpha)\sin(\beta)}{\sin(\beta)^2(\pi-\alpha+\sin(\alpha)\cos(\alpha))+ \sin(\alpha)^2(\pi-\beta+\sin(\beta)\cos(\beta))}\right)^3 \,\mathrm{d}\beta\,\mathrm{d}\alpha,
\end{multline*}
and a series expansion of $\sin$ and $\cos$ shows that $p_n(2) =\frac{32 n}{\tau}\int_0^{v_n}\int_0^{v_n}\left( \frac{\alpha\beta}{\alpha^2+\beta^2}  \right)^3\mathrm{d}\beta\mathrm{d}\alpha + O(\log n)^2/n$. This gives $p_n(2)=\frac{2n v_n^2}{\tau}+ O(\log n)^2/n$.  This together with \eqref{eq:defvnangles} implies $p_n(2)=\frac{1}{2}+ O(\log n)^2/n$. 
\end{proof}



\begin{thebibliography}{99}
\bibitem{AKLK13}
Aurenhammer, F., Klein, R. and Lee, D. T. (2013). Voronoi Diagrams and Delaunay Triangulations. {\em World Scientific} {\bf 8}

\bibitem{BB92}
Barbour, A.D.\ and Brown, T.C.\ (1992).
Stein's method and point process approximation.
{\em Stochastic Process. Appl.}	 {\bf 43}, 9--31.
	
\bibitem{BHJ92}
Barbour, A.D., Holst, L. and Janson, S.\ (1992). 
{\em Poisson approximation}. Vol. 2.
Oxford University Press.


\bibitem{BM02}
Barbour, A.D.\ and Mansson, M.\ (2002). 
Compound Poisson process approximation. 
{\em Ann.\ Probab.} {\bf 30}, 1492--1537.

\bibitem{BMP22}
Basrak, B., Molchanov, I. and Planini\'c, H.\ (2022).
Extremal behavior of stationary marked point processes.
{\em Preprint.} arXiv:2210.01474



\bibitem{BSY22}
Bobrowski, O., Schulte, M. and Yogeshwaran, D.\ (2022). Poisson process approximation under stabilization and Palm coupling. {\em Ann. Henri Lebesgue} {\bf 5}, 1489--1534.

\bibitem{CDS12}
Cheng, S. W.,  Dey, T. K . and Shewchuk, J.\ (2012). Delaunay Mesh Generation {\em CRC Press}

\bibitem{CSKM13}
Chiu, S. N., Stoyan, D. Kendall, W. S. and Mecke, J.\ (2013). Stochastic geometry and its applications {\em Wiley Series in Probability and Statistics, third edition}

\bibitem{CX04}
Chen, L.H.Y. and Xia, A.\ (2004). Stein's method, Palm theory and Poisson process approximation. {\em Ann. Probab.} {\bf 32}, 2545-2569.

\bibitem{CG06}
Cazals, F. and Giesen, J. \ (2006). Delaunay triangulation based surface reconstruction. {\em Springer}, 231--276-

\bibitem{C14}
Chenavier, N. \ (2014). A general study of extremes of stationary random tessellations with applications. {\em Stoch. Proc. Appl.} {\bf 124}, 2917--2953.

\bibitem{CH16}
Chenavier, N. and Hemsley, R. \ (2016). Extremes for the inradius in the Poisson line tessellation. {\em Adv. Appl. Probab.}  {\bf 48}, 544--573. 

\bibitem{CHO22}
Chenavier, N., Henze, N. and Otto, M. \ (2022). Limit laws for large $k$th-nearest neighbor balls. {\em J. Appl. Probab.} {\bf 59}, 880--894.

\bibitem{CN19}
Chenavier, N. and Nagel, W. (2019). The largest order statistics for the inradius in an isotropic STIT tessellation. {\em Extremes} {\bf 22}, 571--598.
	
\bibitem{CR18}
Chenavier, N. and Robert, C.Y.\ (2018). Cluster size distributions of extreme values for the Poisson–Voronoi tessellation. {\em Ann. Appl. Probab.} {\bf 28}, 3291--3323.

\bibitem{CX22}
Cong, T. and Xia, A. \ (2022). Convergence rate for geometric statistics of point processes with fast decay dependence. {\em Electron. J. Probab.} {\bf 28} 1--35.


\bibitem{GY05}
Georgii, H. O.  and Yoo, H. J. \ (2005). Conditional intensity and Gibbsianness of determinantal point processes. {\em J. Stat. Phys.} 118, 55–84.


\bibitem{H82}
Henze, N.\ (1982). The limit distribution for maxima of ``weighted'' $r$th-nearest-neighbour distances. {\em J. Appl. Probab.}{\bf 19}(2), 344-354.


\bibitem{HP13}
Heinrich, L.\ and Pawlas, Z.\ (2013). Absolute regularity and Brillinger-mixing of stationary point processes. {\em Lith. Math. J.} {\bf 53}, 293--310.

\bibitem{Kal17}
Kallenberg, O.\ (2017). {\em Random Measures, Theory and Applications.} Springer, Cham.


\bibitem{LastPenrose}
Last, G.\ and Penrose, M.\ (2017).
{\em Lectures on the Poisson Process.}
Cambridge University Press. 

\bibitem{LS2019}
Last, G. \ and Szekli, R.\ (2019).
On negative association of some finite point processes on general state spaces.
{\em J. Appl. Prob.} {\bf 56}, 139--152.


\bibitem{Mardia77}
Mardia, K. V., Edwards, R. and Puri, M. L. \ (1977). Analysis of central place theory. {\em Bull. Inst. Internat. Statist.} {\bf 47(2)}, 93--110, 138--147.

\bibitem{MW72}
Milne, R. K., Westcott, M. \ (1972). Furhter results for Gauss-Poisson Process. {\em Adv. Appl. Probab.}{\bf 4(1)}, 151--176. 

\bibitem{N70}
Newman, D. S. \ (1970). A new family of point processes which are characterized by their second moment properties. {\em J. Appl. Probab}{\bf 7}, 338--358.
		
\bibitem{Nikitenko}
Nikitenko, A. \ (2019).
Integrating by Spheres: Summary of Blaschke-Petkantschin Formulas. 
{\em Preprint.} arXiv:1904.10750.

\bibitem{OBSC00}
Okabe, A., Boots, B., Sugihara, K., and Chiu, S.N. \ (2009). {\em Spatial tessellations: concepts and applications of Voronoi diagrams.} Wiley Series in Probability and Statistics, John Wiley \& Sons Ltd., 2nd edition. 

\bibitem{O21}
Otto, M. (2021+). Extremal behavior of large cells in the Poisson hyperplane mosaic. arXiv:2106:14823. 

\bibitem{O22}
Otto, M. (2023+). Poisson approximation of Poisson-driven point processes and extreme values in stochastic geometry. To appear in {\em Bernoulli}. arXiv:2005.10116.

\bibitem{PS22}
Pianoforte, and Schulte, M. (2022). Criteria for Poisson process convergence with applications to inhomogeneous Poisson-Voronoi tessellations. {\em Stochastic Process. Appl.} {\bf 147}, 388--422.

\bibitem{PDL19}
Poinas, A., Delyon, B., and Lavancier, F.\ (2019). Mixing properties and central limit theorem for associated point processes. {\em Bernoulli}, {\bf 25(3)}, 1724--1754.

\bibitem{Rio17}
Rio, E. (2017). {\em Asymptotic Theory of Weakly Dependent Random Processes.} Springer.


\bibitem{S05}
Schuhmacher, D.  (2005). Distance estimates for Poisson process approximations of dependent thinnings {\em Electron. J. Probab.} {\bf 10}, 165--201.

\bibitem{S09}
Schuhmacher, D. (2009). Distance estimates for dependent thinnings of point processes with densities, {\em Electron. J. Probab.} {\bf 14}, 1080--1116.

\bibitem{SW08}
Schneider, R.  and Weil, W.\ (2008). {\em Stochastic and Integral Geometry.} Springer, Berlin.


\bibitem{ST12}
Schulte, M. and Thäle, C. (2012). The scaling limit of Poisson-driven order statistics with applications in geometric probability. {\em Stoch. Proc. Appl.} {\bf 122}, 4096--4120.

\bibitem{ST16}
Schulte, M. and Thäle, C. (2016). Poisson point process convergence and extreme values in stochastic geometry. In: {\em Stochastic analysis for Poisson point processes: Malliavin calculus, Wiener-Ito chaos expansions and stochastic geometry}, editors: G. Peccati and M. Reitzner, Springer \& Bocconi Series, 255--294.



\bibitem{Vil08}
Villani, C.\ (2008). {\em Optimal Transport: Old and New.} Springer, Berlin.

\bibitem{VR59}
Volkonskii, V.A. and Rozanov, Yu. A. (1959). Some limit theorems for random functions. I. {\em Theory Probab. Appl.}, {\bf 4}, 178--
197.

\bibitem{Yos76}
Yoshihara, K.-I.\ (1976). Limiting behaviour of U-statistics for stationary, absolutely regular processes. {\em Z. Wahrscheinlichkeitstheorie verw. Geb.} {\bf 35}, 237--252.

\end{thebibliography}
\end{document}